\newtheorem{thm}{Theorem}[section]
\newtheorem{cor}[thm]{Corollary}
\newtheorem{lem}[thm]{Lemma}
\newtheorem{prop}[thm]{Proposition}
\newtheorem{defn}[thm]{Definition}
\theoremstyle{remark}
\newcommand{\D}{d}
\newcommand{\ml}{\textbf}
\def\proof{\trivlist \item[\hskip \labelsep{\bf Proof.\ }]}
\newtheorem{lemma}{Lemma}[section]
\newtheorem{example}[lemma]{Example}
\newtheorem{remark}[lemma]{Remark}
\newcommand{\bl}{\begin{lem}}
\newcommand{\el}{\end{lem}}
\newcommand{\bt}{\begin{thm}}
\newcommand{\et}{\end{thm}}
\newcommand{\bc}{\begin{cor}}
\newcommand{\ec}{\end{cor}}
\newcommand{\bp}{\begin{proof}}
\newcommand{\ep}{\end{proof}}
\newcommand{\bpr}{\begin{prop}}
\newcommand{\epr}{\end{prop}}
\newcommand{\brem}{\begin{remark} }
\newcommand{\erem}{\end{remark}}
\newcommand{\bd}{\begin{defn} \em}
\newcommand{\ed}{\end{defn}}
\newcommand{\bex}{\begin{example}
}
\newcommand{\eex}{\end{example}}
\newcommand{\bi}{\begin{itemize}
  }
\newcommand{\ei}{\end{itemize}}
\newcommand{\ben}{\begin{enumerate} }
\newcommand{\een}{\end{enumerate} }
\newenvironment{enumr}{

\begin{enumerate}     }{\end{enumerate}

}
\newcommand{\al}[1]{\forall #1\:}
\newlength{\hilflh}
\renewcommand{\emptyset}{\varnothing}
\newcommand{\cL}{{\mathcal L}}
\newcommand{\cP}{{\mathcal P}}
\newcommand{\cO}{{\mathcal O}}
\newcommand{\cC}{{\mathcal C}}
\newcommand{\ga}{\alpha}
\newcommand{\gb}{\beta}
\newcommand{\gd}{\delta}
\renewcommand{\ge}{\varepsilon}
\newcommand{\gl}{\lambda}
\newcommand{\gs}{\sigma}
\newcommand{\gy}{\gamma}
\newcommand{\gw}{\omega}
\renewcommand{\phi}{\varphi}
\newcommand{\imp}{\rightarrow}
\newcommand{\GL}{\mathbf{GL}}
\newcommand{\GLP}{\mathbf{GLP}}
\newcommand{\ZFC}{\mathsf{ZFC}}
\newcommand{\rst}{\upharpoonright}
\newcommand{\la}{\langle}
\newcommand{\ra}{\rangle}
\renewcommand{\models}{\vDash}      
\newcommand{\nmodels}{\nvDash}
\newcommand{\Imp}{\Rightarrow}
\newcommand{\Var}{\mathrm{Var}}
\newcommand{\On}{\mathrm{On}}
\newcommand{\Lim}{\mathrm{Lim}}
\newcommand{\Suc}{\mathrm{Suc}}
\newcommand{\Log}{\mathrm{Log}}
\newcommand{\tto}{\twoheadrightarrow}
\newcommand{\iso}{\textit{iso}}
\newcommand{\J}{\mathbf{J}}
\newcommand{\exs}[1]{\exists #1\:}
\newcommand{\eop}{$\boxtimes$ \protect\par \addvspace{\topsep}}
\begin{document}

\author{Lev Beklemishev\thanks{Supported by the Russian Foundation for Basic Research (RFBR), Russian Presidential Council for Support of Leading Scientific Schools, and the Swiss--Russian cooperation project STCP--CH--RU ``Computational proof theory.''} \\ Steklov Institute of Mathematics, Moscow \and David Gabelaia \\ Razmadze Institute of Mathematics, Tbilisi}

\title{Topological completeness of the provability logic GLP}

\maketitle

\begin{abstract}
Provability logic $\GLP$ is well-known to be incomplete w.r.t.\ Kripke semantics. A natural topological semantics of $\GLP$ interprets modalities as derivative operators of a polytopological space. Such spaces satisfying all the axioms of $\GLP$ are called GLP-spaces. We develop some constructions to build nontrivial GLP-spaces and show that $\GLP$ is complete w.r.t.\ the class of all GLP-spaces.

{\bf Key words:} provability logic, scattered spaces, GLP
\end{abstract}

\section{Introduction}

This paper continues the study of topological semantics of an important polymodal provability logic $\GLP$ initiated in \cite{BBI09,Bek10a}. This system, introduced by Japaridze \cite{Dzh86,Dzh88}, describes in the style of provability logic all the universally valid schemata for the reflection principles of restricted logical complexity in arithmetic. Thus, it is complete with respect to a very natural kind of proof-theoretic  semantics.

The logic $\GLP$ has been extensively studied in the early 1990s by Ignatiev and Boolos who simplified and extended Japaridze's work (see \cite{Boo93}). More recently, interesting applications of $\GLP$ have been found in proof theory and ordinal analysis of arithmetic. In particular, $\GLP$ gives rise to a natural system of ordinal notations for the ordinal $\ge_0$. Based on the use of $\GLP$, the first author of this paper gave a proof-theoretic analysis of Peano arithmetic, which stimulated further interest towards $\GLP$ (see \cite{Bek04,Bek05} for a detailed survey).

The main obstacle in the study of $\GLP$ is that it is incomplete w.r.t.\ any class of Kripke frames. However, a more general topological semantics for the G\"odel--L\"ob provability logic $\GL$ has been known since the work of Simmons~\cite{Sim75} and Esakia \cite{Esa81}. In the sense of this semantics, the diamond modality is interpreted as the topological derivative operator acting on a scattered topological space. The idea to extend this approach to the polymodal logic $\GLP$ comes quite naturally.\footnote{Leo Esakia raised this question several times in conversations with the first author.}

The language of $\GLP$ has denumerably many modalities each of which individually behaves like the one of $\GL$ and can therefore be interpreted as a derivative operator of a polytopological space $(X,\tau_0,\tau_1,\dots)$. The additional axioms of $\GLP$ imply certain dependencies between the scattered topologies $\tau_i$, which lead the authors of \cite{BBI09} to the concept of \emph{GLP-space}. Thus, GLP-spaces provide an adequate topological semantics for $\GLP$.

The question of completeness of $\GLP$ w.r.t.\ this semantics turned out to be more difficult. The main contribution of \cite{BBI09} was to show that the fragment of $\GLP$ with only two modalities was topologically complete. However, already for the fragment with three modalities the question remained open.
The present paper answers this question positively for the language with infinitely many modalities and shows that $\GLP$ is complete w.r.t.\ the semantics of GLP-spaces.

\section{Preliminaries}

$\GLP$ is a propositional modal logic formulated in a language with infinitely many
modalities $[0]$, $[1]$, $[2]$, \dots. As usual, $\la n\ra\phi$ stands for $\neg[n]\neg\phi$, and $\bot$ is the logical constant `false'.
$\GLP$ is given by the following axiom schemata and inference rules.
\begin{description}
\item[Axioms:]
\begin{enumr}
\item[]
\item Boolean tautologies; \item $[n](\phi\imp\psi)\imp
([n]\phi\imp [n]\psi)$; \item $[n]([n]\phi\imp \phi)\imp [n]\phi$ (L\"ob's axiom);
\item $[m]\phi\imp [n]\phi$, for $m<n$;
\item $\la m\ra\phi \imp [n]\la m\ra\phi$, for $m<n$.
\end{enumr}
\item[Rules:]
\begin{enumr}
\item[]
\item $\vdash\phi,\ \vdash\phi\to\psi\ \Imp\ \vdash\psi$ (modus ponens);
\item $\vdash\phi\ \Imp\ \vdash [n]\phi$, for each $n\in\omega$ (necessitation).
\end{enumr}
\end{description}

In other words, for each modality, $\GLP$ contains the axioms and inference rules of the G\"{o}del-L\"{o}b Logic $\GL$. Axioms (iv) and (v) relate different modalities to one another.

\emph{Neighborhood semantics} for modal logic can be seen both as a generalization of Kripke semantics and as a particular kind of algebraic semantics. Let $X$ be a nonempty set and let $\delta_n:\cP(X)\to \cP(X)$, for each $n\in\gw$, be some unary operators acting on the boolean algebra of all subsets of $X$. Such a structure $X$ will be called a \emph{neighborhood frame}. A \emph{valuation} on $X$ is a map $v:\Var\to \cP(X)$ from the set of propositional variables to the powerset of $X$, which is extended to all formulas in the language of $\GLP$ as follows:
\bi
\item $v(\phi\lor \psi)=v(\phi)\cup v(\psi)$, $v(\neg \phi)= X\setminus v(\phi)$, $v(\bot)=\emptyset$,
\item $v(\la n\ra\phi) = \delta_n(v(\phi))$, $v([n]\phi) = \tilde{\delta}_n(v(\phi)),$
where $\tilde{\delta}_n(A):=X\setminus\delta_n(X\setminus A)$, for any $A\subseteq X$.
\ei
A formula $\phi$ is \emph{valid in $X$}, denoted $X\models\phi$, if $v(\phi)=X$ for all $v$. The \emph{logic of $X$} is the set $\Log(X)$ of all formulas valid in $X$.

Next we observe that any neighborhood frame of $\GLP$ is, essentially, a polytopological space, in which all operators $\delta_n$ can be interpreted as the derived set operators.

Suppose $(X,\tau)$ is a topological space. The \emph{derived set operator} on $X$ is the map $d_\tau:\cP(X)\to\cP(X)$ associating with each $A\subseteq X$ its set of limit points, denoted $d_\tau(A)$. In other words, $x\in d_\tau(A)$ iff every open neighborhood of $x$ contains a point $y\neq x$ such that $y\in A$. We shall write $dA$ for $d_\tau(A)$ whenever the topology $\tau$ is given from the context.

A topological space $(X,\tau)$ is called \emph{scattered} if every nonempty subspace $A\subseteq X$ has an isolated point. A polytopological space $(X,\tau_0,\tau_1,\dots)$ is called a \emph{GLP-space} (cf. \cite{BBI09}) if the following conditions hold, for each $n<\gw$:
\bi
\item $\tau_n$ is scattered;
\item $\tau_n\subseteq \tau_{n+1}$;
\item $d_{\tau_n}(A)$ is $\tau_{n+1}$-open, for each $A\subseteq X$.
\ei
This concept is justified by the basic observation that GLP-spaces are equivalent to the neighborhood frames validating all the axioms of $\GLP$. Thus, to each $\GLP$-space we associate a neighborhood frame $(X,d_0,d_1,\dots)$ where $d_n=d_{\tau_n}$, for each $n<\gw$. Then the following proposition holds.

\bpr \label{neighbor}
\begin{enumr}
\item If $(X,\tau_0,\tau_1,\dots)$ is a GLP-space, then in the associated neighborhood frame all the theorems of $\GLP$ are valid: $(X,d_0,d_1,\dots)\models \GLP$.
\item
Suppose $(X,\delta_0,\delta_1,\dots)$ is a neighborhood frame such that $X\models \GLP$. Then there are naturally defined topologies $\tau_0$, $\tau_1$, \dots on $X$ such that $\delta_n=d_{\tau_n}$, for each $n<\gw$. Moreover, $(X,\tau_0,\tau_1,\dots)$ is a GLP-space.
\end{enumr}
\epr

A proof of this proposition builds upon the ideas of H.~Simmons~\cite{Sim75} and L.~Esakia~\cite{Esa81,Esa03}, which by now have become almost folklore, but it is somewhat lengthy. For the reader's convenience we give this proof in the Appendix.

By Proposition~\ref{neighbor}, the study of neighborhood semantics for $\GLP$ becomes the study of GLP-spaces. Since $\GLP$ is well-known to be incomplete w.r.t.\ any class of Kripke frames the following question naturally arises:
 \bi \item
Is $\GLP$ complete w.r.t.\ neighborhood semantics?
\ei
In other words, we ask whether there is a suitable class of neighborhood frames $\cC$ such that any formula is valid in all frames in $\cC$ iff it is provable in $\GLP$. Equivalently, this problem was stated in \cite{BBI09} as the question whether $\GLP$ is the logic of the class of all GLP-spaces.

This question was positively answered for the language with only two modalities in \cite{BBI09}. However, for the case of three or more modalities even a more basic problem was open:

\bi \item Is there a $\GLP$-space in which all the topologies are non-discrete?
\ei

Some difficulties surrounding these problems are exposed in the papers \cite{BBI09,Bek10a,Bek09a}. Given a scattered space $(X,\tau)$ we can define a new topology $\tau^+$ on $X$ as the coarsest topology containing $\tau\cup\{d_\tau(A):A\subseteq X\}$. Then $(X,\tau,\tau^+,\tau^{++},\dots)$ becomes a GLP-space which we call a GLP-space \emph{naturally generated from $(X,\tau)$}.

As a fundamental example, one can consider the class of GLP-spaces naturally generated from the standard order topology $\tau_<$ on the ordinals. We call them \emph{ordinal GLP-spaces}. Quite unexpectedly, these spaces turned out to have some deep relations with set theory, in particular, with stationary reflection. For example, it can be shown that the first limit point of $\tau_<^+$ is the cardinal $\aleph_1$, whereas the first limit point of $\tau_<^{++}$ is the so-called \emph{doubly reflecting cardinal}. The existence of this (relatively weak) large cardinal is, however, independent from the axioms of $\ZFC$. Thus, it is independent from $\ZFC$ whether $\tau^{++}$ is discrete on any ordinal GLP-space.

In spite of the above, the present paper gives positive answers to both questions formulated above while firmly standing on the grounds of $\ZFC$. This is achieved by developing new topological techniques related to the study of maximal rank preserving extensions of scattered topologies. In particular, we introduce a certain class of topologies we call \emph{$\ell$-maximal} and show that they are sufficiently well-behaved w.r.t.\ the operation $\tau\mapsto \tau^+$.

As another ingredient of the topological completeness proof, we introduce an operation on scattered spaces called \emph{$d$-product}. It can be seen as a generalization of the usual multiplication operation on the ordinals (considered as linear orderings) to arbitrary scattered spaces. We think that this operation could be of some interest in its own right.

The paper is organized as follows. In Section 3 we introduce some useful standard notions related to scattered spaces and prove a few facts about the Cantor--Bendixon rank function. Maximal rank preserving and $\ell$-maximal spaces are introduced in Section 4. In Section 5 we show how this techniques allows one to build a non-discrete GLP-space. Section 6 essentially deals with logic and contains a reduction of the topological completeness theorem to some statement of purely topological and combinatorial nature (main lemma). The rest of the paper is devoted to a proof of this lemma. In Section 7 the $d$-product operation is introduced and a few basic properties of this operation are established. Using $d$-products, as well as the techniques of Sections 4 and 5, two basic constructions on GLP-spaces are presented in Section 6. Finally, Section 8 contains a proof of the main lemma.

\section{Scattered spaces, ranks and $d$-maps}

Given a scattered space
$X=(X,\tau)$ one can define a transfinite \emph{Cantor--Bendixon sequence} of closed subsets $\D^\alpha X$ of $X$, for any ordinal $\alpha$, as follows:
\begin{itemize}
\item $\D^0 X=X$;
\quad $\D^{\alpha+1} X=\D(\D^{\alpha} X)$ and
\item $\D^\alpha X=\bigcap\limits_{\beta<\alpha}\D^\beta X$ if $\alpha$ is a limit
ordinal.
\end{itemize}
Since $X$ is a scattered space, $\D^{\alpha+1} X\subset\D^\alpha
X$ is a strict inclusion unless $\D^\alpha X=\emptyset$.
Therefore, from cardinality considerations, for some ordinal
$\ga$ we must have $\D^{\ga} X=\emptyset$. Call the least such
$\ga$ the \emph{Cantor--Bendixon rank} of $X$ and denote it by
$\rho(X)$. The \emph{rank function} $\rho_X:X\to \On$ is defined by
$$\rho_X(x):=\min\{\ga: x\notin d^{\ga+1}(X)\}.$$
Notice that $\rho_X$ maps $X$ onto $\rho_X(X)=\{\ga:\ga<\rho(X)\}$. Also, $\rho_X(x)\geq\ga$ iff $x\in d^\ga X$. We omit the subscript $X$ whenever there is no danger of confusion.

\bex
Let $\Omega$ be an ordinal equipped with its \emph{left topology}, that is, a subset $U\subseteq \Omega$ is open iff $\al{\ga\in U}\al{\gb<\ga} \gb\in U$. Then $\rho(\ga)=\ga$, for all $\ga$.
\eex

\bex Let $\Omega$ be an ordinal equipped with its \emph{order topology} generated by $\{0\}$ and the intervals $(\ga,\gb]$, for all  $\ga<\gb\leq\Omega$. Then $\rho$ is the function $r$ defined by
$$r(0)=0; \quad r(\ga)=\gb \text{ if $\ga=\gy+\gw^{\gb}$, for some $\gy$, $\gb$.}$$ By the Cantor normal form theorem, for any $\ga>0$, such a $\gb$ is uniquely defined.
\eex

A map $f:X\to Y$ between topological spaces is called a \emph{d-map} if $f$ is continuous, open and \emph{pointwise discrete}, that is, $f^{-1}(y)$ is a discrete subspace of $X$ for each $y\in Y$.
$d$-maps are well-known to satisfy the properties expressed in the following lemma (see \cite{BEG05}).

\bl
\begin{enumr}
\item $f^{-1}(d_Y(A))= d_X(f^{-1}(A))$, for any $A\subseteq Y$;
\item $f^{-1}:(\cP(Y),d_Y)\to (\cP(X),d_X)$ is a homomorphism of modal algebras;
\item If $f$ is onto, then $\Log(X)\subseteq \Log(Y)$.
\end{enumr}
\el

In fact, (i) is easy to check directly; (ii) follows from (i) and (iii) from (ii). From (i) we easily obtain the following corollary by transfinite induction.

\bc \label{CB-pres}
Suppose $f:X\to Y$ is a $d$-map. Then, for each ordinal $\ga$, $d_X^\ga X= f^{-1}(d_Y^\ga Y)$.
\ec

The following lemma states that the rank function, when the ordinals are equipped with their left topology, becomes a $d$-map. It is also uniquely characterized by this property.

\bl \label{rank-dmap} Let $\Omega$ be the ordinal $\rho(X)$ taken with its left topology. Then
\begin{enumr}
\item $\rho_X:X\tto \Omega$ is an onto $d$-map;
\item If $f:X\to \gl$ is a $d$-map, where $\gl$ is an ordinal with its left topology, then $f(X)=\Omega$ and $f=\rho_X$.
\end{enumr}
\el

\bp Let $\rho$ denote $\rho_X$.

(i) $\rho$ is continuous, because the set $\rho^{-1}[0,\ga)=X\setminus d^{\ga}X$ is open.

$\rho$ being open means that, for each open $U\subseteq X$, whenever $\ga \in \rho(U)$ and $\gb<\ga$ one has $\gb\in \rho(U)$. Fix an $x\in U$ such that $\rho(x)=\ga$. Consider the set  $X_\gb:=\rho^{-1}(\gb)=d^\gb X\setminus d(d^\gb X)$. For any subset $A$ of a scattered space we have $d(A)=d(A\setminus dA)$, hence $d X_\gb=d(d^\gb X)\subseteq d^\ga X$. Since $\rho(x)=\ga$ it follows that $x\in d X_\gb$. Hence $U\cap X_\gb\neq\emptyset$, that is, $\gb\in \rho(U)$.

$\rho$ being pointwise discrete means $X_\ga=\rho^{-1}(\ga)$ is discrete, for each $\ga$. In fact, $X_\ga=d^\ga X\setminus d(d^{\ga} X)$ is the set of isolated points of $d^\ga X$. Thus, it cannot help being  discrete.

(ii) Since $f$ is a $d$-map, by Corollary \ref{CB-pres} we obtain that $f^{-1}[\ga,\gl)=d^\ga X$, for each $\ga<\gl$. Hence, $f^{-1}(\ga)=\rho^{-1}(\ga)$, for each $\ga<\gl$, that is, $f=\rho$ and $f(X)=\rho(X)=\Omega$.
\ep

\bc  If $f:X\to Y$ is a $d$-map, then $\rho_X=\rho_Y\circ f$. \ec

\bp Clearly, $\rho_Y\circ f: X\to \Omega$ is a $d$-map. Statement (ii) of the previous lemma yields the result.
\ep

Note that if $U\in\tau$ is open, then the image of $U$ under the map $\rho$ is always a leftwards closed interval of ordinals and thus is itself an ordinal, which we denote $\rho(U)$.
We denote the complement of a set $\D^\alpha X$ by $O_\alpha(X)$ or simply $O_\ga$ when there is no danger of confusion.

\section{Maximal and $\ell$-maximal topologies}

First we introduce two notions: that of a \emph{rank preserving extension} of a scattered topology, and a more restrictive notion of an \emph{$\ell$-extension}. The first one is quite natural and it will help us to build a non-discrete GLP-space. The second is the one we actually need for the proof of the topological completeness theorem.

\bd Let $(X,\tau)$ be a scattered space.
\bi \item A topology $\sigma$ on $X$ is called a rank preserving \emph{extension} of $\tau$, if $\sigma\supseteq \tau$ and $\rho_{\sigma}(x)=\rho_\tau(x)$, for all $x\in X$.
\item $\sigma$ is an \emph{$\ell$-extension} of $\tau$, if it is a rank-preserving extension of $\tau$ and the identity function $id:(X,\tau)\to (X,\sigma)$ is continuous at all points of successor rank, that is,
\bi \item[($\ell$)] for any $U\in\sigma$ and any $x\in U$ with $\rho(x)\notin\Lim$ there exists $V\in\tau$ such that $x\in V\subseteq U$.\ei
\ei
\ed

We note that both notions are transitive and, in fact, define partial orders on the set of all scattered topologies on $X$. The following observation will be repeatedly used below.

\bl \label{rank-pres}
$\sigma$ is a rank preserving extension of $\tau$ iff $\rho_\tau: (X,\sigma)\tto \rho_\tau(X)$ is an open map iff $\rho_\tau(U)$ is leftwards closed, for each $U\in \sigma$.
\el
This statement follows from Lemma \ref{rank-dmap}.

We are interested in the maximal rank preserving and maximal $\ell$-extensions. These are naturally defined as follows.

\bd
\begin{enumr}
\item $(X,\tau)$ is \emph{maximal}\footnote{ In the standard terminology used in general topology, \emph{maximal} or \emph{maximal scattered} would mean something entirely different than defined here. Throughout this paper we use the term \emph{maximal} as a shorthand for \emph{maximal scattered with the given rank function}.} if $(X,\tau)$ does not have any proper rank-preserving extensions, in other words, if
$$\forall\sigma\:(\sigma\supsetneqq\tau \ \Imp \ \exists x\:\rho_{\sigma}(x)\neq \rho_{\tau}(x)).$$
\item $(X,\tau)$ is \emph{$\ell$-maximal} if $(X,\tau)$ does not have any proper $\ell$-extensions.
\end{enumr}
\ed

It is worth noting that any maximal topology is $\ell$-maximal, but not conversely.

\bl\label{l:maximal extensions}  \begin{enumr}
\item Any $(X,\tau)$ has a maximal extension;
\item Any $(X,\tau)$ has an $\ell$-maximal $\ell$-extension.
\end{enumr}
\el

\bp Consider the set of all ($\ell$-)extensions of a given topology $\tau$ ordered by inclusion. We verify, for each of the two orderings, that every chain in it has an upper bound. The result then follows by Zorn's lemma.

Suppose $(\tau_i)_{i\in I}$ is a linear chain of extensions. Then the topology $\sigma$ generated by the union $\upsilon=\bigcup_{i\in I} \tau_i$ is apparently a scattered topology containing $\tau$. Note that $\upsilon$ is closed under finite intersections and thus serves as a base for $\sigma$. Let $\rho:X\tto\Omega$ be the common rank function of each of the $\tau_i$. In order to apply Lemma \ref{rank-pres} we check that $\rho$ is open w.r.t.\ $\sigma$. In fact, any basic $U\in\upsilon$ is open in the sense of some $\tau_i$, and hence $\rho(U)$ must be open in $\Omega$. Lemma \ref{rank-pres} shows that $\rho$ is the rank function of $\sigma$. Hence (i) holds.

Suppose now that $(\tau_i)_{i\in I}$ is a chain of $\ell$-extensions. Since any $\ell$-extension is an extension, $\sigma$ (defined as above) is an extension of $\tau$. To check the condition $(\ell)$ suppose $U\in\sigma$ is given and $x\in U$ is such that $\rho(x)\notin\Lim$. Since $\sigma$ is generated by the base $\upsilon$, there exists $U'\in\upsilon$ with $x\in U'\subseteq U$. It follows that $U'\in\tau_i$ for some $i$. As $\tau_i$ is an $\ell$-extension of $\tau$, there exists $V\in\tau$ such that $x\in V\subseteq U'$. Since $U'\subseteq U$, we are done.
\ep

Next we prove a workable characterization of $\ell$-maximal topologies.

\bl\label{l:l-maximality alternatively}
Let $(X,\tau)$ be a scattered space and $\rho$ its rank function. Then $X$ is $\ell$-maximal iff the following condition holds.
\begin{enumr}
\item[$(lm)$] For any $x\in X$ with  rank $\lambda=\rho(x)\in\Lim$ and any open $V\subseteq O_\gl$, either $V\cup\{x\}\in\tau$ or there is a neighborhood $U$ of $x$ such that $\rho(V\cap U)<\gl$.
\end{enumr}
\el
Intuitively, condition $(lm)$ means that in the neighborhood of a point
$x$ of limit rank any open set $V$ is either very large (contains a
punctured neighborhood of $x$), or relatively small (there is a
punctured neighborhood whose intersection with $V$ has bounded rank).


\bp
(only if) Suppose the condition $(lm)$ is not met. Thus, there exists an $x\in X$ with $\rho(x)=\lambda\in\Lim$ and an open $V_0\subseteq O_\gl$ such that $V:=V_0\cup\{x\}$ is not open and $\rho(U\cap V_0)=\gl$, for any neighborhood $U$ of $x$.

Let us generate a new topology $\sigma$ by adding $V$ to $\tau$. We claim that $\sigma$ is an $\ell$-extension of $\tau$.
First, we observe that the neighborhood filter at any point $z\in X$, $z\neq x$, did not change. In fact, any $\sigma$-neighborhood $W$ of $z$ either contains a $\tau$-neighborhood of $z$ or contains a subset of the form $V\cap U$ where $U\in\tau$ and $z\in V\cap U${$=(V_0\cap U)\cup\{x\}$}. In the former case we are done. In the latter case, if $z\neq x$, we have {$z\in V_0\cap U\in\tau$ and $V_0\cap U\subseteq W$}.

From this observation we conclude that $id:(X,\tau)\to (X,\sigma)$ is continuous at all the points $z\neq x$, in particular, condition $(\ell)$ holds. We show that $\rho_\sigma=\rho$ by applying Lemma \ref{rank-pres}. To check that $\rho:(X,\sigma)\to \Omega$ is open it is sufficient to show that $\rho(W)$ is a neighborhood of $\gl=\rho(x)$ (in the left topology) for any $\gs$-neighborhood $W$ of $x$. For all the other points the statement is obvious by the previous observation.

We know that $W$ contains a set of the form $V\cap U$ with $x\in U\in \tau$. Clearly, $V\cap U= (V_0\cap U)\cup \{x\}$. { By the choice of $V_0$, we have} $\rho(V_0\cap U)=\gl$ and hence $\rho(W)\supseteq\rho(V\cap U) = [0,\gl]$ is a neighborhood of $\gl$, as required.

Thus, $\sigma$ is a proper $\ell$-extension of $\tau$, hence $X$ is not $\ell$-maximal.

\bigskip

{ (if) Suppose $X$ is not $\ell$-maximal and let $\sigma$ be its proper $\ell$-extension. Then the map $id:(X,\tau)\to(X,\sigma)$ is not continuous at certain points. Let $x\in X$ be such a point with the least rank $\rho(x)=\gl$. It follows from condition $(\ell)$ that $\gl\in\Lim$. Since the map $id$ is not continuous at $x$, there exists a $\sigma$-open neighborhood $V$ of $x$ which contains no $\tau$-open neighborhood of $x$. Denote $V_0:=V\cap O_\lambda$. It is clear that $V_0\in\sigma$. It follows from the minimality of $\lambda$ that $V_0\in\tau$. From the discontinuity of $id$ at $x$ we may conclude that $V_0\cup\{x\}\not\in\tau$. However, $\{x\}\cup V_0= V\cap (\{x\}\cup O_\lambda)\in\sigma$, hence, for any $\tau$-neighborhood $U$ of $x$ we have $(U\cap V_0)\cup\{x\}=U\cap(V_0\cup\{x\})$ is a $\sigma$-neighborhood of $x$. It follows that $\rho(U\cap V_0)=\gl$. Thus $x$ and $V_0$ witness that the condition $(lm)$ is violated for $\tau$. } \ep

Our next objective is to show that whenever $f:X\to Y$ is an onto $d$-map and $Y'$ is any $\ell$-maximal $\ell$-extension of $Y$, one can always find a suitable $\ell$-maximal $\ell$-extension $X'$ of $X$ so that $f:X'\to Y'$ is still a $d$-map. We need an auxiliary lemma.

\bl\label{l:lifting d-maps along l-extensions} Let $f:X\to Y$ be a $d$-map between a scattered space $X=(X,\tau)$ and an $\ell$-maximal space $Y=(Y,\sigma)$. Let $X'=(X,\tau')$ be any $\ell$-extension of $X$. Then $f:X'\to Y$ is also a $d$-map.
\el
\bp
That $f:X'\to Y$ is continuous and pointwise discrete follows from the fact that $\tau'\supseteq\tau$. We only have to show that $f:X'\to Y$ is open. For the sake of contradiction suppose $f$ is not. Then there exists a point $x\in X'$ and a neighborhood $U\in\tau'$ of $x$ such that $f(U)$ does not contain a neighborhood of $y=f(x)$. We can take such an $x$ of the minimal possible rank $\gl$. This ensures that the restriction of $f$ to the subspace $O_\gl(X')$ is open, hence a $d$-map. (Since $X'$ is a rank preserving  extension of $X$, the set $O_\gl=O_\gl(X)$ is the same as $O_\gl(X')$.)

Since $id:X\to X'$ is continuous at the points of non-limit ranks and $f:X\to Y$ is a $d$-map, we observe that $\gl\in\Lim$. Otherwise, for a sufficiently small $\tau$-neighborhood $V$ of $x$ we would have $V\subseteq U$, and then $f(V)\subseteq f(U)$ would be a $\sigma$-neighborhood of $f(x)$.

Since $O_\gl\in\tau$, we may assume that the selected neighborhood $U$ has the form $U=U_0\cup\{x\}$ where ${U}_0\subseteq O_\gl$ and $U_0\in\tau$. Thus, $\rho(x)=\gl\in\Lim$, $V_0:=f(U_0)$ is open, and $V:=f(U)= V_0\cup\{y\}$ is not open in $Y$. Since $Y$ is $\ell$-maximal, by Lemma \ref{l:l-maximality alternatively} we obtain an open neighborhood $W$ of $y$ such that $\gb:=\rho(V_0\cap W)<\gl$. We notice that $f(U_0\cap f^{-1}(W))= V_0\cap W$. Hence, $\rho(U_0\cap f^{-1}(W))=\gb$. Since $f^{-1}(W)\in\tau$ and $U\in\tau'$ we obtain that $U_1:=U\cap f^{-1}(W)=(U_0\cap f^{-1}(W))\cup\{x\}$ is a $\tau'$-open neighborhood of $x$. Therefore, { on the one hand,} $\rho(U_1)=\rho_{\tau'}(U_1)=[0,\gl]$, as $\tau'$ is a rank preserving extension of $\tau$. However, { on the other hand,} $\rho(U_1) = \rho((U_0\cap f^{-1}(W))\cup\{x\})=\gb \cup\{\gl\}$, a contradiction.
\ep

\bl \label{pullback}
Let $X=(X,\tau)$ and $Y=(Y,\sigma)$ be scattered spaces, let $Y'=(Y,\sigma')$ be an $\ell$-maximal $\ell$-extension of $Y$ and let $f:X\to Y$ be a $d$-map. Then there exists an $\ell$-maximal $\ell$-extension $X'=(X,\tau')$ of $X$ such that $f:X'\to Y'$ is a  $d$-map.
$$
\xymatrix{X \ar[r]_d \ar@{.>}[d]_{lm} & Y \ar[d]^{lm} \\ X' \ar@{.>}[r]_d & Y'
}
$$
\el

\bp
It is easily seen that the collection $\theta=\{ f^{-1}(U) : U\in\sigma'\}$ qualifies for a topology on $X$. Since $f:X\to Y$ is continuous, $\theta$ contains $\tau$. It is readily seen that $f:(X,\theta)\to (Y,\sigma')$ is a $d$-map. Thus $\theta$ is a rank preserving extension of $\tau$.

To see that the condition $(\ell)$ is met, take any $x\in X$ of successor rank and any $f^{-1}(V)\ni x$ such that $V\in\sigma'$. Since $f(x)$ is of the same rank as $x$, by condition $(\ell)$ applied to $\sigma'$, there exists $U\in\sigma$ with $f(x)\in U\subseteq V$. It follows that $x\in f^{-1}(U)\subseteq f^{-1}(V)$ and $f^{-1}(U)\in\tau$.

Therefore, $\theta$ is an $\ell$-extension of $\tau$. Take any $\ell$-maximal $\ell$-extension $\tau'$ of $\theta$. By Lemma~\ref{l:lifting d-maps along l-extensions} we obtain that $f:(X',\tau')\to Y'$ is an onto $d$-map. Since $\tau'$ is also an $\ell$-maximal $\ell$-extension of $\tau$, the proof is finished.
\ep

\section{Building a non-discrete GLP-space}

Recall that the \emph{next topology} $\tau^+$ on $X$ is generated by $\tau$ and $\{d(A) : A\subseteq X\}$.
Let $X^+$ denote the space $(X,\tau^+)$. The following lemma gives a useful characterization of the next topology for $\ell$-maximal spaces.

\bl \label{lmax-plus} Suppose $(X,\tau)$ is $\ell$-maximal. Then $\tau^+$ is generated by $\tau$ and the sets $\{d^{\gb+1}(X):\gb<\rho(X)\}$.
\el

\bp
Let $(X,\tau)$ be $\ell$-maximal and let $\tau'$ denote the topology generated by $\tau$ and the sets $\{d^{\gb+1}(X):\gb<\rho(X)\}$. It is clear that each set $d^{\gb+1}(X)=d(d^{\gb}X)$ is open in $\tau^+$. We show the converse.

Let $A\subseteq X$, we show that $d(A)$ is open in $\tau'$. Consider any $x\in d(A)$ and let $\ga=\rho(x)$. If $\ga$ is not a limit ordinal, $\{x\}$ is open in $\tau'$. In fact, since $\rho$ is a $d$-map, $\rho^{-1}(\ga)$ is discrete as a subspace of $(X,\tau)$. Moreover, $\rho^{-1}(\ga)=d^{\ga}(X)\setminus d^{\ga+1}(X)$, hence it is clopen in $\tau'$. It follows that $x$ is isolated in $\tau'$.

Suppose $\ga\in\Lim$ and let $C$ denote the interior of $O_\ga\setminus A$. Since $x\in dA$ we have $\{x\}\cup C\notin\tau$. Hence, by condition $(lm)$, there is an open $U\in\tau$ with $x\in U$ and a $\gb<\ga$ such that $U\cap C\subseteq O_\gb$. Consider $V:= U\cap d^{\gb+1}X$. Since $U$ is open in $\tau$, $V$ is open in $\tau'$. Moreover, $x\in V$. Thus, we only have to show that $V\subseteq dA$.

Suppose the contrary that $z\in V\setminus dA$ for some $z$. Then there exists an open set $U_z\cup \{z\}$ such that $U_z\cap A =\emptyset$ and $U_z\subseteq O_\ga$. It follows that $U_z\subseteq C$ and hence $U_z\cap U\subseteq O_\gb$. Since $z\in V\subseteq U$, we have that $U':=(U_z\cap U)\cup \{z\}=(U_z\cup\{z\})\cap U$ is an open neighborhood of $z$. As $\rho$ is an open map, $\rho(U')$ must be leftwards closed. We have  $\rho(z)\geq \gb$, since $z\in d^{\gb+1}X$, however $\rho(U_z\cap U)\subseteq \rho(O_\gb)\subseteq \gb$, a contradiction. \ep

\bl \label{lift+} Suppose $(X,\tau)$ is $\ell$-maximal and $f:X\to Y$ a $d$-map.
Then $f$ is a $d$-map between $X^+$ and $Y^+$.
\el

\bp We only have to show that $f:X^+\to Y^+$ is open. From the previous lemma we know that $\tau^+$ is generated by $\tau$ and $d_X^{\gb+1}X$ for $\gb<\ga$. Consider {a $\tau^+$-open} set of the form $A\cap d_X^{\gb+1}X$. Since $f^{-1}(d_Y^{\gb+1}Y)= d_X^{\gb+1}X$ ($f$ is rank preserving), we have $f(A\cap d_X^{\gb+1}X)= f(A)\cap d_Y^{\gb+1}Y$, which is open {in $Y^+$}.
\ep

\brem In general, the `next topology' operation is non-monotonic: There is a space $X$ such that $X^+$ is discrete while $(X')^+$ is not, where $X'$ is some maximal extension of $X$.
\erem

Let $\Omega$ denote an ordinal with its left topology. It is easy to check (see \cite{BBI09}) that $\Omega^+$ coincides with the usual order topology on $\Omega$. Let $r$ denote its rank function (see above). In general, for an arbitrary scattered space $X$ let $\rho_X^+$ denote the rank function of $X^+$.

\bc \label{lmax-rank+} If $X$ is $\ell$-maximal, then $\rho_X^+=r\circ\rho_X$. \ec

\bp Let $\Omega:=\rho(X)$ be the rank of $X$. Consider the $d$-map $\rho:X\tto\Omega$.
By Lemma \ref{lift+}, $\rho:X^+\tto \Omega^+$ is a $d$-map.
Since $r$ is the rank function of $\Omega^+$, $r:\Omega^+\to \Omega$ is also a $d$-map.
Hence, $r\circ\rho:X^+\to \Omega$ is a $d$-map and coincides with the rank function of $X^+$.
\ep

\brem
For an arbitrary scattered space $X$ we only have $\rho_X^+\leq r\circ\rho_X$.
\erem

Now we are ready to specify a suitable class of GLP-spaces which will be used for the topological completeness proof.
\bd
Let $(X,\tau)$ be a scattered space. A poly-topological space $(X,\tau_0,\tau_1,\dots)$ is called an \emph{lme-space based on $\tau$} if\footnote{The abbreviation \emph{lme} stands for \emph{limit maximal extension}.}
$\tau_0$ is an $\ell$-maximal $\ell$-extension of $\tau$ and,
for each $n$, $\tau_{n+1}$ is an $\ell$-maximal $\ell$-extension of $\tau_n^+$.
\ed

Clearly, any lme-space is a GLP-space. $(X,\tau_0,\tau_1,\dots)$ is called an \emph{ordinal lme-space} if $X$ is an ordinal (or an interval of the ordinals) and $\tau$ is the order topology on $X$.
Given an lme-space $X$, let $\rho_n$ denote the rank function of $\tau_n$.

\bl $\rho_{n+1}=r\circ \rho_n$.
\el
\bp $\tau_{n+1}$ has the same rank function as $\tau_n^+$, being its $\ell$-extension, hence $\rho_{n+1}=\rho_n^+$. By Corollary \ref{lmax-rank+}, $\rho_n^+=r\circ \rho_n$.\ep

Now we can give an example of a GLP-space in which all topologies are non-discrete.
Take any scattered space $(X,\tau)$ whose rank $\Omega$ satisfies $\gw^\Omega=\Omega$, for example, $X=\ge_0$ with the order topology. Generate some lme-space $(X,\tau_0,\tau_1,\dots)$ based on $\tau$. Then clearly
$\rho_n(X)=r^n(\rho_0(X))=r^n(\Omega)=\Omega$, for each $n$. In particular, any topology $\tau_n$ is non-discrete. Thus, we have proved

\bt \label{lme-spaces}
There is a countable GLP-space $(X,\tau_0,\tau_1,\dots)$ such that each $\tau_n$ is non-discrete.
\et

\section{Topological completeness of GLP}

In this section we reduce the construction of a poly-topological space whose logic is $\ml{GLP}$ to a technical lemma. The rest of the paper is devoted to a proof of this lemma.

Our proof of topological completeness will make use of a subsystem of $\GLP$ introduced in \cite{Bek10} and  denoted \textbf{J}. This logic is defined by weakening axiom (iv) of $\GLP$ to the following axioms (vi) and (vii) both of which are theorems of $\GLP$: \begin{itemize}
\item[(vi)] $[m]\phi\rightarrow[n][m]\phi$, for $n\geq m$;
\item[(vii)] $[m]\phi\rightarrow[m][n]\phi$, for $n>m$.
\end{itemize}
\textbf{J} is the logic of a simple class of frames, which is established by standard methods \cite[Theorem 1]{Bek10}.

\begin{lem} \label{jlem} \textnormal{\textbf{J}} is sound and complete with respect to the class of (finite) frames $(W,R_0,R_1, \dots)$ such that, for all $x,y,z \in W$, \begin{enumerate}
\item $R_k$ are transitive and dually well-founded binary relations;
\item If $xR_ny$, then $xR_m z$ iff $yR_m z$, for $m<n$;
\item $xR_my$ and $yR_n z$ imply $xR_m z$, for $m<n$.
\end{enumerate} \end{lem}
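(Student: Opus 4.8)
The plan is to prove soundness and completeness separately, treating each modality exactly as in $\GL$ and reading the two interaction conditions off the weakened axioms. For soundness it suffices to check that every axiom is valid on every frame satisfying (1)--(3) and that the rules preserve validity; the Boolean and normality axioms hold on any frame, and modus ponens and necessitation preserve validity as usual. For L\"ob's axiom (iii) at modality $n$ the relevant frame property is precisely that $R_n$ be transitive and dually well-founded, i.e.\ clause~(1); this is the classical Segerberg correspondence for $\GL$. The three interaction axioms match the remaining conditions: one checks by the routine frame-correspondence computation that (v) $\la m\ra\phi\imp[n]\la m\ra\phi$ is valid iff $xR_ny\ \&\ xR_mz\Imp yR_mz$; that (vi) $[m]\phi\imp[n][m]\phi$ (for $n>m$) is valid iff $xR_ny\ \&\ yR_mz\Imp xR_mz$; and that (vii) $[m]\phi\imp[m][n]\phi$ is valid iff $xR_my\ \&\ yR_nz\Imp xR_mz$. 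The first two together yield condition (2) and the third yields condition (3), so soundness follows.

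For completeness I would establish the finite model property by the standard maximal-consistent-set construction adapted as for $\GL$. Since every modality carries L\"ob's axiom, the full canonical frame is not dually well-founded, so I fix a refuted formula $\phi\notin\textbf{J}$ and work inside a finite adequate set $\Sigma\ni\phi$ closed under subformulas and single negations (and, crucially, under a bounded stock of modal prefixes, see below). Worlds are the maximal $\textbf{J}$-consistent subsets of $\Sigma$, a finite set $W$. For each $n$ I define $wR_nv$ in the Boolos--Segerberg style: every $[n]\psi\in\Sigma\cap w$ forces $\psi,[n]\psi\in v$, and in addition some $[n]\chi\in\Sigma$ lies in $v\setminus w$. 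This extra strict-descent clause makes the set of true $[n]$-boxed formulas of $\Sigma$ grow strictly along $R_n$, so each $R_n$ is transitive and dually well-founded, giving condition~(1). The truth lemma $w\models\psi\Leftrightarrow\psi\in\Sigma\cap w$ is then proved by induction, the modal case using L\"ob's axiom in the usual way to produce, for $\la n\ra\psi\in w$, a strictly $R_n$-descending successor containing $\psi$; and since $\neg\phi$ is $\textbf{J}$-consistent some world refutes $\phi$.

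The heart of the matter is to check that the resulting finite frame actually lies in the target class, i.e.\ that the defined relations satisfy (2) and (3); these are obtained by pushing boxed formulas along the relations using exactly the axioms identified above. For (3), with $wR_mv$, $vR_nz$ and $m<n$: given $[m]\psi\in w$, axiom (vii) gives $[m][n]\psi\in w$, whence $[n]\psi\in v$ by $wR_mv$, whence $\psi\in z$ by $vR_nz$; and $[m]\psi\in v$ with (vi) gives $[n][m]\psi\in v$, whence $[m]\psi\in z$ by $vR_nz$, the strict-descent clause being inherited throughout. The two halves of (2) are symmetric: $xR_ny\ \&\ yR_mz\Imp xR_mz$ uses (vi) just as above, while $xR_ny\ \&\ xR_mz\Imp yR_mz$ uses the contrapositive $\vdash\la n\ra[m]\psi\imp[m]\psi$ of (v) to move $[m]\psi$ from the $R_n$-successor back to the source. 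The main obstacle is precisely the bookkeeping these arguments demand: they succeed only when $\Sigma$ already contains the compound formulas $[m][n]\psi$, $[n][m]\psi$ and $\la n\ra[m]\psi$ that they invoke, yet naive closure under such prefixes need not terminate. Keeping the adequate set finite while closed under enough modalities, so that dual well-foundedness of every $R_n$ and the interaction conditions hold simultaneously, is the genuinely delicate combinatorial point; this is exactly what the standard construction of \cite[Theorem 1]{Bek10} secures, exploiting that $\phi$ mentions only finitely many modalities together with a normal form bounding the relevant nesting.
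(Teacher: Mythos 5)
The paper itself gives no proof of this lemma: it is imported from \cite{Bek10} with the remark that it ``is established by standard methods,'' so there is no in-paper argument to compare yours against. Your soundness half is correct and complete: the correspondences you identify --- L\"ob's axiom for clause (1), axiom (v) for $xR_ny \wedge xR_mz \Rightarrow yR_mz$, axiom (vi) for $xR_ny\wedge yR_mz\Rightarrow xR_mz$ (which for $n=m$ is transitivity), and axiom (vii) for clause (3) --- are exactly the right ones, and the verification is routine.

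The completeness half, however, contains a genuine gap, one you have located yourself but not closed. Your construction rests entirely on a \emph{finite} adequate set $\Sigma$ closed under the prefixes $[n][m]\psi$, $[m][n]\psi$ and $[n]\la m\ra\psi$ that your verifications of clauses (2) and (3) invoke. As you concede, naive closure does not terminate: already with two modalities, starting from $[0]\psi$, clause (3) forces $[0][1]\psi$ into $\Sigma$; this is itself a $[0]$-boxed formula participating in the definition of $R_0$, so the same requirement forces $[0][1][1]\psi$, and inductively all $[0][1]^k\psi$, of unbounded modal depth. At precisely this point the proposal defers to ``a normal form bounding the relevant nesting'' secured by ``the standard construction'' of the cited reference --- but that bound (or an alternative route, e.g.\ first proving completeness over possibly infinite J-frames via the canonical model and then extracting a finite model by a selective filtration that enforces (2) and (3) directly rather than by formula transfer) is the one nontrivial idea in the theorem, and it is absent. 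Without it, $W$ is not finite, dual well-foundedness of the $R_n$ is not secured, and the argument as written does not go through.
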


Let $R^*_n$ denote the transitive closure of $R_n\cup R_{n+1}\cup \dots$, and let $E_n$ denote the reflexive, symmetric, transitive closure of $R^*_n$. Obviously, each $E_{n+1}$ refines $E_n$. We call each $E_n$ equivalence class a \textit{$n$-sheet}. By {\it 2}., all points in an $n$-sheet are $R_m$ incomparable, for $m<n$. But $R_n$ defines a natural ordering on $n+1$-sheets in the following sense: if $\alpha$ and $\beta$ are $n+1$-sheets, then $\alpha R_n \beta$, iff $\exs{x\in\alpha} \exs{y\in\beta} xR_n y$. By the standard techniques, one can improve on Lemma \ref{jlem} to show that \textbf{J} is complete for such frames, in which the set of $n+1$-sheets contained in each $n$-sheet is a tree under $R_n$, and if $\alpha R_n \beta$ then $xR_n y$ for all $x \in \alpha$, $y \in \beta$ (see \cite[Theorem 2 and Corollary 3.3]{Bek10}). Every such structure is automatically a J-frame, we call such frames \emph{tree-like J-frames}.

As shown in \cite{Bek10}, \textbf{GLP} is reducible to \textbf{J} in the following sense. Let \[M(\phi) := \bigwedge_{i<s}\bigwedge_{k=m_i+1}^n ([m_i]\phi_i\rightarrow[k]\phi_i),\] where $[m_i]\phi_i$, $i<s$, are all subformulas of $\phi$ of the form $[m]\psi$ and $n:=\max_{i<s}m_i$. Also, let $M^+(\phi):=M(\phi)\land\bigwedge_{m\leq n}[m]M(\phi)$.\footnote{The formula $M(\phi)$ was defined in \cite{Bek10} incorrectly, however with the present modification everything in \cite{Bek10} works.}

\begin{prop} [\cite{Bek10}] $\GLP\vdash\phi$ iff $\; \textnormal{\textbf{J}}\vdash M^+(\phi)\rightarrow\phi$. \end{prop}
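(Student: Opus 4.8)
The proposition claims $\GLP \vdash \phi$ iff $\textbf{J} \vdash M^+(\phi) \to \phi$. This is a "reduction" of $\GLP$ to its subsystem $\textbf{J}$. The forward direction should be easy: since axiom (iv) is weakened in $\textbf{J}$ but the weakened forms (vi),(vii) are $\GLP$-theorems, and $M^+(\phi)$ is a conjunction of $\GLP$-provable instances of axiom (iv). The backward direction is the substantive one — recovering full $\GLP$-provability from $\textbf{J}$-provability of a conditional. Let me think through both directions.

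**The forward direction.**

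The plan is to show $\GLP \vdash \phi$ implies $\textbf{J} \vdash M^+(\phi) \to \phi$. Since $\textbf{J}$ is obtained from $\GLP$ by replacing axiom (iv) with the weaker (vi) and (vii), the only gap between the two systems is the implications $[m]\psi \to [k]\psi$ for $m < k$ — precisely the instances of axiom (iv). Now $M^+(\phi)$ is a conjunction that includes exactly these instances $[m_i]\phi_i \to [k]\phi_i$ (for the relevant subformulas and $k$ up to $n$), together with their boxed versions $[m]M(\phi)$. The idea is that under the hypothesis $M^+(\phi)$, one can simulate each use of axiom (iv) appearing in a $\GLP$-derivation of $\phi$, since $M^+(\phi)$ supplies precisely the missing instances and — crucially — the boxed conjuncts $[m]M(\phi)$ ensure these instances remain available under the scope of modalities, so that necessitation in the $\GLP$-proof can be mirrored in $\textbf{J}$.

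**The backward direction.**

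The hard direction is showing $\textbf{J} \vdash M^+(\phi) \to \phi$ implies $\GLP \vdash \phi$. The natural route is semantic, using the completeness of $\textbf{J}$ for tree-like $\textbf{J}$-frames (from Lemma~\ref{jlem} and its refinement). I would argue contrapositively: if $\GLP \nvdash \phi$, then $\phi$ fails on some $\GLP$-model; I must produce a tree-like $\textbf{J}$-frame refuting $M^+(\phi) \to \phi$, i.e.\ a point where $M^+(\phi)$ holds but $\phi$ fails. The standard technique is to take a $\GLP$-countermodel and reshape it into a $\textbf{J}$-frame: since a $\GLP$-frame automatically validates (vi),(vii), it is already a $\textbf{J}$-frame, and at every point of it the axiom (iv) instances comprising $M(\phi)$ are valid, so $M^+(\phi)$ holds throughout. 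The construction must ensure the refuting point of $\phi$ is retained while the global truth of $M^+(\phi)$ is guaranteed. This is exactly the content imported from \cite{Bek10}, where the reduction is established; here the proposition is quoted with attribution, so I would invoke that reference for the reduction lemma rather than reprove the frame surgery from scratch.

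**Main obstacle.**

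The delicate point is the interplay between the boxed conjuncts $[m]M(\phi)$ in $M^+(\phi)$ and the necessitation rule. In the forward direction, a naive attempt using only $M(\phi)$ fails because applying necessitation to a step that used axiom (iv) would require $[m]$ of that instance, which is not guaranteed by $M(\phi)$ alone; this is exactly why $M^+$ rather than $M$ is needed, and indeed the footnote flags that the original definition in \cite{Bek10} was incorrect on this point. I expect that carefully tracking which instances of axiom (iv) can occur inside the scope of which modalities in a $\GLP$-proof of $\phi$ — and verifying that $M^+(\phi)$ captures all of them — is the genuinely technical heart of the argument. Since the result is attributed to \cite{Bek10}, the cleanest presentation is to recall the reduction from that paper and only verify that the corrected formula $M^+(\phi)$ repairs the gap, namely that the boxed conjuncts suffice to push axiom (iv) instances through all necessitation applications appearing in the derivation.
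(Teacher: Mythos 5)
You have the two directions of the equivalence the wrong way round, and this derails most of the proposal. The implication from $\J\vdash M^+(\phi)\to\phi$ to $\GLP\vdash\phi$ is the \emph{trivial} one (the paper says so explicitly and it is the only direction it later uses): every axiom and rule of $\J$ is available in $\GLP$, so $\GLP\vdash M^+(\phi)\to\phi$; each conjunct of $M^+(\phi)$ is an instance of axiom (iv) or a necessitation of one, so $\GLP\vdash M^+(\phi)$; modus ponens gives $\GLP\vdash\phi$. This two-line argument is exactly the observation you park, inertly, in your introduction ("$M^+(\phi)$ is a conjunction of $\GLP$-provable instances of axiom (iv)") --- but you attach it to the wrong direction, where it proves nothing. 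Worse, the semantic argument you then offer for this "backward" direction does not work: you propose to start from a $\GLP$-countermodel of $\phi$ and reshape it into a tree-like $\J$-frame, but $\GLP$ is incomplete with respect to Kripke frames (the paper's opening motivation), so no Kripke $\GLP$-countermodel need exist, and if you mean an algebraic or neighborhood countermodel, converting it into a finite tree-like $\J$-frame is not a routine step --- it is essentially the whole difficulty of the subject.

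The genuinely hard direction is $\GLP\vdash\phi\;\Rightarrow\;\J\vdash M^+(\phi)\to\phi$. Your sketch for it (simulate each use of axiom (iv) in a $\GLP$-derivation, using the boxed conjuncts $[m]M(\phi)$ to survive necessitation) correctly identifies why $M^+$ rather than $M$ is needed, but it has a concrete gap: $M^+(\phi)$ supplies monotonicity instances only for the subformulas $[m_i]\phi_i$ of $\phi$, whereas an arbitrary $\GLP$-derivation of $\phi$ may apply axiom (iv) to formulas that are not subformulas of $\phi$, so a step-by-step simulation of a given derivation does not go through without a normalization argument. Deferring this direction to \cite{Bek10} is legitimate --- the paper itself gives no proof and cites that reference --- but note that the paper also points out this direction falls out of its own development: if $\J\nvdash M^+(\phi)\to\phi$, Kripke completeness of $\J$ for tree-like frames gives a frame $T$ refuting $M^+(\phi)\to\phi$, the main lemma gives a GLP-space $X$ with a $J_n$-morphism onto $T$, Theorem~\ref{t:main-1} gives $X\nmodels\phi$, and soundness (Proposition~\ref{neighbor}) gives $\GLP\nvdash\phi$.
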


For the proof below we will only need the trivial implication from the right to the left. We obtain another proof of this proposition as a byproduct of the topological completeness proof below.

\bigskip
Let $\mathcal{L}_n$ denote the modal language with modalities $[0],[1],\dots,[n]$. Denote by $\ml{J}_{n}$ the logic $\ml{J}$ restricted to $\mathcal{L}_n$. Analogously for $\ml{GLP}_n$. Let $T=(T,R_0,\dots, R_n)$ be a tree-like $J_n$-frame (or $J_n$-tree for short). Recall that $w\in T$ is called a \emph{hereditary $k$-root} if for no $j\geq k$ and no $v\in T$ is it true that $v R_j w$. Note that since $T$ is a $J_n$-tree, for each $w\in T$ and each $k\leq n$ there exists a hereditary $k$-root $v\in T$ such that $v=w$ or  $vR_k w$.

\bd
We view $T$ as a poly-topological space $T=(T,\sigma_0,\dots, \sigma_n)$ by considering all $R_i$-upsets to be $\sigma_i$-open. Given a GLP$_n$ space $X=(X,\tau_0,\dots,\tau_n)$ and a map $f:X\to T$ we will say that $f$ is a \emph{$J_n$-morphism} iff:

\begin{itemize}
\item[($j_1$)] $f:(X,\tau_n)\to (T,\sigma_n)$ is a $d$-map;
\item[($j_2$)] $f:(X,\tau_k)\to (T,\sigma_k)$ is an open map for all $k\leq n$;
\item[($j_3$)] For each $k<n$ and each hereditary $(k+1)$-root $w\in T$, the sets $f^{-1}(R_k^*(w))$ and $f^{-1}(R_k^*(w)\cup\{w\})$ are open in $\tau_k$;
\item[($j_4$)] For each $k<n$ and each hereditary $(k+1)$-root $w\in T$, the set $f^{-1}(w)$ is a $\tau_k$-discrete subspace of $X$.
\end{itemize}

Here $R_k^*(w)$ denotes the set $\bigcup\limits_{i=k}^n R_i(w)$. Also notice that $(j_1)$ would follow from $(j_2)$--$(j_4)$ if one also stated
them for $k=n$ assuming that $R_{n+1}=\emptyset$. In this case each element of $T$ would be an $(n+1)$-root. The same definition also applies to general J$_n$-models.
\ed

A $J_n$-morphism $f:X\to T$ can be thought of as a map which is a weak kind of $d$-map from $(X,\tau_k)$ to $(T,\gs_k)$, for each $k\leq n$. As a consequence, we obtain the following simple but useful observation.

\bl
Suppose $X,Y$ are GLP$_n$-spaces, $g:(Y,\theta_k)\to (X,\tau_k)$ is a $d$-map, for each $k\leq n$, and $f:X\to T$ is a $J_n$-morphism. Then $f\circ g$ is a $J_n$-morphism from $Y$ to $T$.
\el

Let $\tilde\D(A)$ abbreviate $X\setminus\D(X\setminus A)$. Obviously, $x\in\tilde\D(A)$ iff $A$ contains some punctured neighborhood of $x$.
\begin{lem} \label{l:j34}
Conditions ($j_3$) and ($j_4$) together are equivalent to the following one: for any hereditary $(k+1)$-root $w$, \[ f^{-1}(R_k^*(w)\cup\{w\}) \subseteq \tilde\D_k(f^{-1}(R_k^*(w))). \leqno (*) \]
\end{lem}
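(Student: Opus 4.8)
The plan is to establish the equivalence separately for each fixed $k<n$ and each fixed hereditary $(k+1)$-root $w$, since both $(j_3),(j_4)$ and $(*)$ are quantified over exactly the same pairs $(k,w)$. I write $B:=R_k^*(w)$, so that $f^{-1}(B\cup\{w\})=f^{-1}(B)\cup f^{-1}(w)$ is a disjoint union, and I use the stated reformulation that $x\in\tilde\D_k(A)$ precisely when $A$ contains a punctured $\tau_k$-neighborhood of $x$. The only structural fact about the frame I will need is that each $R_i$ is irreflexive — being transitive and dually well-founded — so that $w\notin B$ and hence $f^{-1}(w)$ and $f^{-1}(B)$ are disjoint.

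First I would prove $(*)\Rightarrow(j_3),(j_4)$, which needs nothing beyond reading $(*)$ locally. For the openness of $f^{-1}(B)$, take $x\in f^{-1}(B)$; then $x\in f^{-1}(B\cup\{w\})$, so by $(*)$ there is a $\tau_k$-open $U\ni x$ with $U\setminus\{x\}\subseteq f^{-1}(B)$, and since $x\in f^{-1}(B)$ as well we conclude $U\subseteq f^{-1}(B)$. The same argument, with $f^{-1}(B\cup\{w\})$ in place of $f^{-1}(B)$ on the right (using $f^{-1}(B)\subseteq f^{-1}(B\cup\{w\})$), gives openness of $f^{-1}(B\cup\{w\})$, so $(j_3)$ holds. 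For $(j_4)$, take $x\in f^{-1}(w)\subseteq f^{-1}(B\cup\{w\})$ and apply $(*)$ to obtain a $\tau_k$-open $U\ni x$ with $U\setminus\{x\}\subseteq f^{-1}(B)$; since $f^{-1}(B)$ is disjoint from $f^{-1}(w)$, this forces $U\cap f^{-1}(w)=\{x\}$, so $x$ is isolated in $f^{-1}(w)$ and the fibre is $\tau_k$-discrete.

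For the converse $(j_3),(j_4)\Rightarrow(*)$, take $x$ with $f(x)\in B\cup\{w\}$ and split into two cases. If $f(x)\in B$, then by $(j_3)$ the set $f^{-1}(B)$ is itself a $\tau_k$-open neighborhood of $x$ contained in $f^{-1}(B)$, so $x\in\tilde\D_k(f^{-1}(B))$ trivially. If $f(x)=w$, then by $(j_4)$ there is a $\tau_k$-open $U_0\ni x$ meeting $f^{-1}(w)$ only in $x$; intersecting $U_0$ with the $\tau_k$-open set $f^{-1}(B\cup\{w\})$ provided by $(j_3)$ yields an open $U\ni x$ with $U\setminus\{x\}\subseteq f^{-1}(B\cup\{w\})$ and $(U\setminus\{x\})\cap f^{-1}(w)=\emptyset$, hence $U\setminus\{x\}\subseteq f^{-1}(B)$, giving $x\in\tilde\D_k(f^{-1}(B))$.

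The argument is short once the definitions are unwound; the one place a reader might expect trouble is deriving the openness of $f^{-1}(B)$ from $(*)$, where it is tempting to invoke the tree-like combinatorics relating $w$ to other roots. The key observation that avoids this is that $(*)$, stated at the single root $w$, already asserts that every point of $f^{-1}(B)\cup f^{-1}(w)$ has a punctured $\tau_k$-neighborhood inside $f^{-1}(B)$; since such a point also lies in the target set, the punctured neighborhood upgrades to a genuine one, and no frame combinatorics beyond the irreflexivity of $R_k$ is required.
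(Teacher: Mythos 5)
Your proof is correct and follows essentially the same route as the paper's: both directions are handled by the same case split on whether the point lies in $f^{-1}(R_k^*(w))$ or in $f^{-1}(w)$, using the punctured-neighborhood reading of $\tilde\D_k$ and the fact that $w\notin R_k^*(w)$ exactly as the paper does. The only (harmless) difference is that you explicitly justify $w\notin R_k^*(w)$ via irreflexivity of the relations, which the paper takes for granted.
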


\begin{proof} Suppose $(*)$ holds. Then $f^{-1}(R_k^*(w))$ contains a punctured neighborhood of every point $a\in f^{-1}(R_k^*(w)\cup\{w\})$, hence a neighborhood of every $a\in f^{-1}(R_k^*(w))$. So, $f^{-1}(R_k^*(w))$ is open. It also follows that $f^{-1}(R_k^*(w)\cup\{w\})$ contains a neighborhood of every point $a\in f^{-1}(R_k^*(w)\cup\{w\})$, hence $f^{-1}(R_k^*(w)\cup\{w\})$ is also open.

To show that $f^{-1}(w)$ is discrete assume $a\in f^{-1}(w)$. Select a punctured neighborhood $V_a$ of $a$ such that $V_a\subseteq f^{-1}(R_k^*(w))$. Since $w\notin R_k^*(w)$ we have $V_a\cap f^{-1}(w)=\emptyset$, as required.

Suppose $(j_3)$ and $(j_4)$ hold, we show $(*)$. Assume $a\in f^{-1}(R_k^*(w)\cup\{w\})$. We have to construct a punctured neighborhood of $a$ contained in $f^{-1}(R_k^*(w))$. Consider  $$U:=f^{-1}(R_k^*(w)\cup\{w\})=f^{-1}(R_k^*(w))\cup f^{-1}(\{w\}).$$ By the first part of $(j_3)$, $U$ is a neighborhood of $a$. If $a\in f^{-1}(R_k^*(w))$ then $V:=f^{-1}(R_k^*(w))$ is a neighborhood of $a$ by the second part of $(j_3)$, so $V-\{a\}$ is as required. If $a\in f^{-1}(w)$ then by $(j_4)$ there is a neighborhood $V_a$ such that $V_a\cap f^{-1}(w)=\{a\}$. Then, $$V_a\cap U= (V_a\cap f^{-1}(R_k^*(w)))\cup \{a\}$$ is a neighborhood of $a$. Then, $(V_a\cap U)\setminus\{a\}$ is a punctured neighborhood of $a$ contained in $f^{-1}(R_k^*(w))$.
\end{proof}

The following theorem is crucial.

\begin{thm}\label{t:main-1}
Let $X$ be a ${GLP}_n$-space, $T$ a ${J}_n$-tree, $f:X\to T$ a ${J}_n$-morphism and $\varphi$ a $\mathcal{L}_n$-formula. Then $X\models\varphi$ iff $T\models M^+(\varphi)\to\varphi$.
\end{thm}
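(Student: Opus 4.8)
The plan is to reduce everything to a single correspondence between valuations on $X$ and valuations on $T$ transported along $f$, exploiting two background facts. First, since each $\sigma_k$ is the upset topology of $R_k$, the topological and Kripke semantics on $T$ coincide: $d_{\sigma_k}(A)=\{t:R_k(t)\cap A\neq\emptyset\}$, so $\langle k\rangle$ acts as the $R_k$-diamond and $T\models\mathbf{J}_n$. Second, because $X$ is a GLP$_n$-space, every conjunct of $M^+(\varphi)$ is an instance of a $\GLP$-theorem (the clauses of $M(\varphi)$ are instances of axiom (iv), the boxed copies follow by necessitation), so $X\models M^+(\varphi)$; this lets $M^+(\varphi)$ be ``switched on'' everywhere on the $X$-side.

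The core is a correspondence lemma for pulled-back valuations. Fix a valuation $u$ on $T$, put $v(p):=f^{-1}(u(p))$ and let $N:=u(M^+(\varphi))$. Using transitivity together with the J-frame clauses (2),(3) and the shape of $M^+$, one checks that $N$ is $R_k$-upward closed for every $k\le n$; hence $N$ is $\sigma_k$-open for all $k$ and $f^{-1}(N)$ is $\tau_k$-open (continuity of $f$ for each $\tau_k$ being part of what $(j_2)$--$(j_4)$ guarantee). I would then prove, by induction on a subformula $\psi\preceq\varphi$, that
\[ v(\psi)\cap f^{-1}(N)=f^{-1}\big(u(\psi)\cap N\big). \]
The atomic and Boolean steps are immediate (for negation one works relative to the fixed set $f^{-1}(N)$). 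For a modality $\langle k\rangle\psi$ with $k=n$ one uses $(j_1)$: since $f$ is a genuine $d$-map for $\tau_n$, $d_{\tau_n}f^{-1}(A)=f^{-1}(d_{\sigma_n}A)$, and combining this with the induction hypothesis and openness of the region gives the claim.

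The case $k<n$ is the heart of the argument and the step I expect to be the main obstacle. Here $f$ is only a weak $d$-map, and the equality $d_{\tau_k}(v(\psi))=f^{-1}\big(d_{\sigma_k}(u(\psi))\big)$ on $f^{-1}(N)$ must be extracted from the reformulation $(*)$ of Lemma~\ref{l:j34}, which says $f$ behaves like a $d$-map on the $R_k^\ast$-cone over each hereditary $(k+1)$-root. The discrepancy between the $R_k$-diamond demanded on the $T$-side and the combined $R_k^\ast$-structure that $(*)$ controls is bridged precisely by the monotonicity clauses $[m_i]\phi_i\to[k']\phi_i$: these hold throughout $N$ (which is exactly why the boxed copies in $M^+$ are needed and why $N$ must be upward closed), forcing the $\psi$-information carried by $R_{k+1},\dots,R_n$ to be already visible at the $R_k$-level. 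This is where the definition of a $J_n$-morphism is used in full.

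From the correspondence the two directions follow. For $X\models\varphi\Rightarrow T\models M^+(\varphi)\to\varphi$ I argue contrapositively: a refuting $u$ gives $t_0\in N\setminus u(\varphi)$, and choosing $x_0\in f^{-1}(t_0)$ (nonempty, as $f$ is onto) the correspondence yields $x_0\in f^{-1}(N)\setminus v(\varphi)$, so $X\not\models\varphi$. The converse cannot be obtained by substituting $M^+(\varphi)$ into the hypothesis (circular, since $M^+(\varphi)$ is $X$-valid) nor by a naive push-forward ($f$ need not be injective and an arbitrary valuation is not fibre-saturated). Instead, from a refutation $x_0\notin v(\varphi)$ in $X$ (with $x_0\in v(M^+(\varphi))$) I would construct a valuation $u$ on $T$ realizing the $\varphi$-type of $x_0$ at $t_0=f(x_0)$, using the onto $d$-map $f:(X,\tau_n)\to(T,\sigma_n)$ and the $\tau_k$-discreteness of the root fibres $(j_4)$ to propagate the choice consistently up the finite tree, and then read off $t_0\in u(M^+(\varphi))\setminus u(\varphi)$. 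Verifying that such a $u$ exists for arbitrary valuations is the second delicate point, parallel to the $k<n$ case above.
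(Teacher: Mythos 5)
Your first half follows the paper's route --- pull the valuation back along $f$, prove by induction on subformulas that $f^{-1}$ commutes with the valuation, handle $[n]$ via $(j_1)$ and $[k]$ with $k<n$ via condition $(*)$ of Lemma~\ref{l:j34} together with the monotonicity conjuncts of $M^+$ --- but your relativization to $N:=u(M^+(\varphi))$ introduces two concrete problems. First, the claim that $f^{-1}(N)$ is $\tau_k$-open is unjustified: a $J_n$-morphism is \emph{not} required to be continuous as a map $(X,\tau_k)\to(T,\sigma_k)$ for $k<n$; $(j_2)$ is openness, and $(j_3)$ gives openness of preimages only of the special cones $R_k^*(w)$ and $R_k^*(w)\cup\{w\}$ for hereditary $(k+1)$-roots (the fibre of an $R_k$-maximal node, which is an $R_k$-upset, is merely required to be discrete by $(j_4)$). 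Second, and more seriously, in the key step for $k<n$ the node at which $(*)$ must be applied is the hereditary $(k+1)$-root $v$ with $vR_{k+1}f(x)$, which lies \emph{below} $f(x)$; since $N$ is only upward closed, $f(x)\in N$ does not yield $v\in u(M(\varphi))$, and that is precisely what is needed to pass from $v\models[k]\psi$ (obtained via $R_k(v)=R_k(f(x))$) to $R_k^*(v)\subseteq u(\psi)$. The paper sidesteps both issues by assuming, without loss of generality, that the refuting point is the root of $T$ (legitimate because the trees come from completeness of $\mathbf{J}$ for tree-like frames): then $M^+(\varphi)$ at the root forces $M(\varphi)$ at every node of $T$, and the unrelativized identity $\nu'(\theta)=f^{-1}(\nu(\theta))$ goes through. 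You should adopt that normalization rather than relativize.

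For the converse direction, note that the paper's own proof establishes only the implication from $T\nmodels M^+(\varphi)\to\varphi$ to $X\nmodels\varphi$, which is the only one used in Theorem~\ref{t:main-2}. The implication you try to prove by pushing a refuting valuation forward is in fact false as stated: already for $n=0$ one has $M^+(\varphi)=\top$, so it would assert that a surjective $d$-map reflects validity. But the map $f:[1,\omega]\tto T$ onto the two-element chain $a\,R_0\,b$ given by $f(\omega)=a$ and $f(k)=b$ for $k<\omega$ is a $J_0$-morphism, the formula $\la 0\ra p\to[0]p$ is valid on $T$, yet it fails on $[1,\omega]$ at $\omega$ when $v(p)$ is the set of even numbers. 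Hence no construction of the kind you sketch (``realizing the $\varphi$-type of $x_0$ at $f(x_0)$'') can succeed in general; this half should be dropped, not repaired.
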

\begin{proof}
Suppose $T\nmodels M^+(\varphi)\to\varphi$. Then for some valuation $\nu$ on $T$ and some point $w\in T$ (assume without loss of generality that $w$ is the hereditary $0$-root of $T$) we have that $w\in\nu(M^+(\varphi))$ but $w\not\in\nu(\varphi)$. Consider a valuation $\nu'$ on $X$ by taking $\nu'(p)=f^{-1}(\nu(p))$.

\begin{lem}
For all subformulas $\theta$ of $\varphi$, we have $\nu'(\theta)=f^{-1}(\nu(\theta))$.
\end{lem}

\begin{proof}
We argue by induction on the complexity of $\theta$.
If $\theta$ is a propositional letter, the claim is provided by the definition of $\nu'$.
The case of propositional connectives is trivial.

If $\theta=[n]\psi$, then the claim follows by condition $(j_1)$ of $f$ being a $J_n$-morphism.

Suppose $\theta=[k]\psi$ for some $k<n$. To show that $\nu'(\theta)\subseteq f^{-1}(\nu(\theta))$ assume $x\in\nu'(\theta)$. Then there exists a $U\subseteq X$ such that $\{x\}\cup U\in\tau_k$ and $U\subseteq\nu'(\psi)$. By IH we obtain $U\subseteq f^{-1}(\nu(\psi))$. Hence $f(U)\subseteq f(f^{-1}(\nu(\psi)))=\nu(\psi)$. By $(j_2)$, the set $f(\{x\}\cup U)=\{f(x)\}\cup f(U)$ is an $R_k$-upset and so $R_k(f(x))\subseteq f(U)\subseteq\nu(\psi)$. It follows that $f(x)\in\nu([k]\psi)$. In other words, $x\in f^{-1}(\nu(\theta))$.

For the converse inclusion suppose $x\in f^{-1}(\nu(\theta))$, that is,  $f(x)\models [k]\psi$. We must show $x\in \nu'(\theta)$.
By the induction hypothesis,
$$\nu'(\theta)=\tilde\D_k(\nu'(\psi))=\tilde\D_k(f^{-1}(\nu(\psi))).$$

Let $v\in T$ be a hereditary $(k+1)$-root such that $v=f(x)$ or $v R_{k+1} f(x)$. Since $v$ and $f(x)$ are in the same $(k+1)$-sheet, $R_k(v)=R_k(f(x))$. Thus $v\models[k]\psi$. We also have $v\models M^+(\varphi)$. In particular, $v\models [k]\psi\to[k']\psi$ for any $k'$ with $k\leq k'\leq n$ and hence $v\models[k']\psi$. It follows that for each $k'$ between $k$ and $n$ we have $R_{k'}(v)\subseteq\nu(\psi)$. Therefore $R^*_{k}(v)\subseteq \nu(\psi)$ and hence $f^{-1}(R^*_{k}(v))\subseteq f^{-1}(\nu(\psi))$.
By the construction of $v$, $x\in f^{-1}(R^*_{k}(v)\cup\{v\})$. Hence,  by Lemma \ref{l:j34},
$$x\in \tilde\D_k(f^{-1}(R^*_{k}(v)))\subseteq\tilde\D_k(f^{-1}(\nu(\psi))),$$
as required. \end{proof}

From this lemma we obtain $y\notin \nu'(\varphi)=f^{-1}(\nu(\varphi))$, for any $y$ with $f(y)=w$. Consequently, $X\nmodels\varphi$.
\end{proof}

The proof of the following lemma will be provided later on.

\begin{lem}[main] \label{l:key construction}
For each finite ${J}_n$-tree $T$ there exist an ordinal lme-space $X=([1,\gl],\tau_0,\dots,\tau_n)$ and an onto ${J}_n$-morphism $f:X\tto T$, where $\gl<\epsilon_0$.
\end{lem}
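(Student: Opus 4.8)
The plan is to argue by induction on the number of modalities $n$, reducing the problem for a $J_n$-tree to the same problem for smaller trees and then gluing the pieces together by means of the $d$-product operation of Section~7 together with the extension machinery of Sections~4 and~5. For the base case $n=0$ the tree $T=(T,R_0)$ is just a finite tree, and a $J_0$-morphism onto $T$ is precisely an onto $d$-map $f:(X,\tau_0)\tto (T,\sigma_0)$. I would first realize $T$ as a $d$-image of the order topology: a finite tree of height $h$ is carried by a suitable finite power $\gw^{h}$ (or a finite multiple thereof, to account for branching), in the familiar manner of topological $\GL$-completeness. Since $T$ is finite it has no points of limit rank, so by Lemma~\ref{l:l-maximality alternatively} it is already $\ell$-maximal; hence Lemma~\ref{pullback} lets me replace the order topology by an $\ell$-maximal $\ell$-extension $\tau_0$ while keeping $f$ a $d$-map. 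The ordinal used is a finite power of $\gw$, well below $\epsilon_0$.

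For the inductive step I decompose $T$ along its coarsest relation: the $1$-sheets of $T$ form a finite tree $T_0$ under $R_0$, which is a $J_0$-tree, and each $1$-sheet $S$, after shifting the indices of $R_1,\dots,R_n$ down by one, is a finite $J_{n-1}$-tree. The base case supplies an ordinal lme-space and an onto $d$-map $h$ realizing $T_0$, and the induction hypothesis supplies, for each sheet $S$, an ordinal lme-space $Y_S$ (with ordinal $<\epsilon_0$) and an onto $J_{n-1}$-morphism $g_S:Y_S\tto S$. The key step is to assemble these data into a single ordinal space. Here the $d$-product enters: generalizing ordinal multiplication, it replaces the points of one scattered space by copies of another, and iterating it (interspersed with disjoint sums to realize the branching of $T_0$) lets me build a space in which collapsing the fibres over the base recovers the $R_0$-tree $T_0$, while each fibre carries the recursively built topologies realizing $R_1,\dots,R_n$. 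Since the $d$-product of ordinal intervals is again (isomorphic to) an ordinal interval and only finitely many factors occur, the resulting ordinal $\gl$ stays below $\epsilon_0$.

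It then remains to equip the glued space with lme-topologies $\tau_0,\dots,\tau_n$ and to verify that the induced $f$ is a $J_n$-morphism. I would take $\tau_0$ to be an $\ell$-maximal $\ell$-extension of the order topology and, inductively, $\tau_{k+1}$ an $\ell$-maximal $\ell$-extension of $\tau_k^+$, each time invoking Lemma~\ref{pullback} to lift the gluing $d$-maps along the chosen extension and Lemma~\ref{lift+} (with the characterization in Lemma~\ref{lmax-plus}) to control the $+$-operation on $\ell$-maximal spaces. Conditions $(j_1)$ and $(j_2)$ would then be transported from the corresponding $d$-map and openness properties of $h$ and the $g_S$, and conditions $(j_3)$ and $(j_4)$ checked in their combined form $(*)$ via Lemma~\ref{l:j34}.

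The main obstacle is exactly this combination step: one must arrange the $d$-product and the successive $\ell$-maximal $\ell$-extensions so that the resulting poly-topology is genuinely an lme-space — i.e.\ the repeated applications of $\tau\mapsto\tau^+$ and of $\ell$-maximalisation remain compatible with the fibre decomposition — while \emph{simultaneously} ensuring that $f$ satisfies all four morphism conditions. The delicate point is the behaviour at points of limit rank, where condition $(*)$, equivalently the punctured-neighbourhood dichotomy $(lm)$, must be reconciled with the way the $d$-product inserts fibres; this is precisely where $\ell$-maximality (Lemma~\ref{l:l-maximality alternatively}) and the description of $\tau^+$ for $\ell$-maximal spaces (Lemma~\ref{lmax-plus}) do the real work. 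Careful bookkeeping of the ordinal heights through the iterated $d$-products, so as to guarantee $\gl<\epsilon_0$, is a secondary but necessary chore.
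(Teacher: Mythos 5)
Your overall strategy --- induction on $n$, decomposition along $R_0$, realization of the sheets by the induction hypothesis, gluing by $d$-products and disjoint sums, and $\ell$-maximalisation via Lemmas~\ref{pullback} and~\ref{lift+} --- is the same as the paper's. But there is a genuine gap at the very point where the induction on $n$ has to close: you never say how a $J_{n-1}$-realization of a single $1$-sheet becomes usable inside a $J_n$-morphism. The induction hypothesis for a sheet gives an ordinal lme-space $([1,\mu],\tau_1,\dots,\tau_{n})$ whose \emph{bottom} topology interprets the modality $[1]$; to use it you must insert a new topology underneath, i.e.\ find $\gs_0$ such that the old bottom topology becomes an $\ell$-maximal $\ell$-extension of $\gs_0^+$. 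This is exactly the Lifting lemma (Lemma~\ref{lme-lift}): one passes from $[0,\mu]$ to $[1,\gw^{\mu}]$, takes $\gs_0$ to be an $\ell$-maximal $\ell$-extension of the order topology there, observes that its rank function $r$ is a $d$-map onto $[0,\mu]$ with the left topology and, by Lemma~\ref{lift+}, that $r:([1,\gw^{\mu}],\gs_0^+)\tto([0,\mu],\tau_{<})$ is a $d$-map, and then pulls the whole lme-structure back along $r$ via Lemma~\ref{lme-pull}. Without this step the case $R_0=\emptyset$ cannot be handled at all, and it is precisely the exponentiation $\mu\mapsto\gw^{\mu}$, occurring once per modality, that makes $\ge_0$ (closure under $x\mapsto\gw^x$) the correct bound --- it is not mere bookkeeping.

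A second, lesser, point: the $d$-product is asymmetric (only the isolated points of the second factor are replaced by copies of the first, and a single first factor is substituted uniformly), so you cannot substitute the various \emph{distinct} sheets into a one-shot realization of the quotient $J_0$-tree, as your sketch suggests. The paper instead runs a subordinate induction on the $R_0$-height: at each stage the root's $1$-sheet (lifted as above) is the second factor $Y$, the disjoint sum of the recursively obtained realizations of the subtrees generated by the hereditary $1$-roots is the first factor $X$, and $f$ is defined via $\pi_0$ on $Z_0$ and $\pi_1$ on $Z_1$. Your phrase ``iterating, interspersed with disjoint sums'' gestures at this, but the precise organization matters when verifying $(j_2)$--$(j_4)$ at $k=0$; for instance, the openness of $f(\pi_1^{-1}(U))=h(U)\cup R_0(a)$ uses that every nonempty open $U\subseteq Y$ contains a point of rank $0$, which is a feature of this particular arrangement of the two factors.
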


Using this lemma we can prove that the logic $\ml{GLP}$ is topologically complete. Let $\mathcal{L}_\omega$ denote the modal language with modalities $[k],\ k<\omega$.

\begin{thm}\label{t:main-2}
Let $\varphi$ be a formula of $\mathcal{L}_\omega$. If $\GLP\nvdash\varphi$ then $\varphi$ can be refuted on a GLP-space.
\end{thm}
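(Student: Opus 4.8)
The plan is to reduce the infinitary completeness statement for $\GLP$ to the finite case that has already been set up, using the two preceding results together with a standard finite-model-property argument. Suppose $\GLP\nvdash\varphi$ for some $\mathcal{L}_\omega$-formula $\varphi$. First I would observe that $\varphi$ uses only finitely many modalities, say those among $[0],\dots,[n]$, so that $\varphi$ is in fact an $\mathcal{L}_n$-formula and $\GLP_n\nvdash\varphi$. By the reduction of $\GLP$ to $\mathbf{J}$ (the Proposition attributed to \cite{Bek10}), from $\GLP_n\nvdash\varphi$ we obtain $\mathbf{J}_n\nvdash M^+(\varphi)\to\varphi$. Here only the nontrivial direction of that Proposition is needed; this is precisely the implication the authors single out as the one they will independently reprove, so I should be careful to invoke only the direction that is legitimately available.

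Next I would apply completeness of $\mathbf{J}$ with respect to finite tree-like frames (Lemma~\ref{jlem} and its refinement to tree-like $\mathbf{J}_n$-frames). Since $\mathbf{J}_n\nvdash M^+(\varphi)\to\varphi$, there is a finite $J_n$-tree $T$ with $T\nmodels M^+(\varphi)\to\varphi$. At this point the main lemma (Lemma~\ref{l:key construction}) supplies an ordinal lme-space $X=([1,\gl],\tau_0,\dots,\tau_n)$ with $\gl<\ge_0$ and an onto $J_n$-morphism $f:X\tto T$. Because an lme-space is in particular a GLP$_n$-space, $X$ is a legitimate carrier for refuting $\varphi$.

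The decisive step is then Theorem~\ref{t:main-1}: since $f:X\to T$ is a $J_n$-morphism and $T\nmodels M^+(\varphi)\to\varphi$, the theorem gives $X\nmodels\varphi$. Finally I would upgrade the $\mathcal{L}_n$-space $X$ to a full GLP-space (with all modalities $[k]$, $k<\omega$): one extends the tower of topologies beyond $\tau_n$, for instance by taking $\tau_{n+1}$ to be an $\ell$-maximal $\ell$-extension of $\tau_n^+$ and iterating, so that $(X,\tau_0,\tau_1,\dots)$ is a genuine GLP-space whose first $n+1$ topologies are the given ones. Since $\varphi$ only involves modalities up to $[n]$, its valuation on this extended space agrees with its valuation on $X$, so the extended GLP-space still refutes $\varphi$.

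The hard part is, of course, Lemma~\ref{l:key construction}, whose proof is deferred to the remainder of the paper; granting it, the argument above is essentially a bookkeeping assembly of Theorem~\ref{t:main-1}, the $\mathbf{J}$-completeness results, and the $\GLP$-to-$\mathbf{J}$ reduction. The only genuinely delicate points in the present theorem are (a) justifying the reduction of the infinitary problem to a fixed finite language $\mathcal{L}_n$, and (b) ensuring that extending $X$ to infinitely many topologies preserves both the GLP-space axioms and the refutation of $\varphi$; both are routine but should be stated explicitly rather than glossed over.
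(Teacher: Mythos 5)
Your proof is correct and follows essentially the same route as the paper's: restrict to $\mathcal{L}_n$, pass to $\mathbf{J}_n$ via the easy direction of the reduction, take a finite $J_n$-tree countermodel, and combine the main lemma with Theorem~\ref{t:main-1}. The only (immaterial) difference is in the final extension step, where the paper simply takes $\tau_i$ discrete for all $i>n$ --- which trivially satisfies the GLP-space conditions --- instead of continuing the tower of $\ell$-maximal $\ell$-extensions.
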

\begin{proof} Suppose $\GLP\nvdash\varphi$ and let $n$ be the maximal such that $[n]$ occurs in $\varphi$. Obviously,  $\J_n\nvdash M^+(\varphi)\to\varphi$. Then there exists a finite $J_n$-tree $T$ such that $T\nmodels M^+(\varphi)\to\varphi$. By Lemma~\ref{l:key construction} there exists a {GLP}$_n$-space $X=([1,\gl],\tau_0,\dots,\tau_n)$ and a $J_n$-morphism $f:X\tto T$. By Theorem~\ref{t:main-1} we have $X\nmodels\varphi$. Let $X_\omega$ denote the GLP-space $X_\omega=(X,\tau_0,\dots,\tau_n,\tau_{n+1},\dots)$ where topology $\tau_{i}$ is discrete for $i>n$. It is obvious that $X_\omega\nmodels\varphi$.
\end{proof}

The topological completeness theorem can also be stated in a stronger \emph{uniform} way. Recall that $\ge_0$ is the supremum of the countable ordinals $\gw_k$ recursively defined by $\gw_0=1$ and $\gw_{k+1}=\gw^{\gw_k}$.

\bt
There is an ordinal lme-space $X=(\ge_0,\tau_0,\tau_1,\dots)$ such that $\Log(X)=\GLP$.
\et

\bp Let $\phi_0$, $\phi_1$, \dots be an enumeration of all the formulas of $\cL_\gw$. Using Theorem \ref{t:main-2} select ordinal lme-spaces  $X_i=([1,\gl_i],\tau_0^i,\tau^i_1,\dots)$ in such a way that $X_i\nmodels \phi_i$, for each $i<\gw$. We can assume that $\gl_i<\ge_0$, for each $i<\gw$. Consider the ordinal $\gl:=\sum_{i<\gw}\gl_i$. The interval $[1,\gl)$ is naturally identified with the disjoint union $\bigsqcup_{i<\gw}[1,\gl_i]$. Hence, we can define the topologies $\tau_i$ on $[1,\gl)$ in such a way that  $X=([1,\gl),\tau_0,\tau_1,\dots)$ is isomorphic to the topological sum $\bigsqcup_{i<\gw} X_i$. Then clearly $\gl\leq\ge_0$ and each formula $\phi$ such that $\GLP\nvdash\phi$ is refutable on $X$. Hence, $\Log(X)=\GLP$.

In fact, $\gl$ must coincide with $\ge_0$. Assume $\gl<\gw_n$. Then for the topology $\tau_n$ we have $\rho_n(X)\leq r^{n+1}(\gw_n)=0$ by Theorem \ref{lme-spaces}. However, this contradicts the fact that the unprovable formula $[n]\bot$ is refutable in $X$. Therefore, $\gl=\ge_0$ and $X$ is isomorphic to an ordinal lme-space based on $\ge_0$.
\ep

In order to prove the main lemma we introduce the notion of \emph{d-product} of scattered spaces.

\section{$d$-product}

\bd
Let $(X,\tau_X)$ and $(Y,\tau_Y)$ be any topological spaces. We define their \emph{$d$-product space}  $(Z,\tau_Z)$, denoted $X\otimes_d Y$, as follows.

Notice that $Y$ is a union of its isolated points and limit points, $Y=\iso(Y)\cup d(Y)$. For all $y\in \iso(Y)$, let $X_y$ denote pairwise disjoint copies of $X$, and let  $i_y:X\to X_y$ be the associated homeomorphism maps.

Let $Z_0$ be the topological sum of $\{X_y:y\in \iso(Y)\}$, that is,
$Z_0:=\bigsqcup_{y\in \iso(Y)} X_y$. $Z_0$ can also be defined as the cartesian product $X\times \iso(Y)$ of $X$ and the discrete space $\iso(Y)$. Projection $\pi_0:Z_0\tto X$ is defined in a natural way, that is, $\pi_0(i_y(x))=x$, for each $y\in \iso(Y)$.

Let $Z_1$ be a copy of the set $dY$ disjoint from $Z_0$, and $\pi:Z_1\to dY$ the associated bijection. Put  $Z:=Z_0\cup Z_1$. We set $\pi_1(x):=y$, if $x\in X_y$ and $y\in \iso(Y)$, and $\pi_1(x):=\pi(x)$, if $x\in Z_1$. It is also convenient to let $X_y:=\{y\}$, if $y\in dY$, thus, $X_y=\pi_1^{-1}(y)$, for each $y\in Y$.

Let a topology $\tau_Z$ on $Z$ be generated by the one inherited from $Z_0$ (with the basic open sets $\{i_y(V):V\in\tau_X,\ y\in \iso(Y)\}$) and by all sets $\{\pi_1^{-1}(U): U\in \tau_Y\}$.
\ed

We note that, for each $y\in \iso(Y)$ and $U\subseteq Y$, the set $\pi_1^{-1}(U)\cap X_y$ is either empty or coincides with $X_y$. Hence, the above basic open sets form a base of topology $\tau_Z$. It follows that any open set of $\tau_Z$ has the form $V\cup \pi_1^{-1}(U)$, where $V$ is open in $Z_0$ and $U\in\tau_Y$. (Pay attention that this union need not be disjoint.) It also follows that the topologies induced from $Z$ on $Z_0$ and $Z_1$ are homeomorphic to those of the product $X\times \iso(Y)$ and $Y$, respectively.

\bigskip
As a typical example, consider the $d$-product of two compact ordinal spaces $[1,\lambda]$ and $[1,\mu]$ taken with their interval topologies. We claim that $[1,\lambda]\otimes_d [1,\mu]$ is isomorphic to $[1,\lambda\mu]$ (with the interval topology). Indeed, every $\ga\in [1,\lambda\mu]$ either has the form $\lambda\gb$ with $\gb\in\Lim$, or belongs to a (clopen) interval $I_{\gb+1}:=[\lambda\gb+1, \lambda(\gb+1)]$ isomorphic to $[1,\lambda]$. In the former case, $\ga=\gl\gb$ corresponds to a limit point $\gb\in [1,\mu]$. In the latter case, $\ga$ belongs to a copy of $[1,\lambda]$ corresponding to an isolated point $\gb+1$ of $[1,\mu]$.

The described bijection is, in fact, a homeomorphism: an interval of the form $(\gd,\ga]$, where $\gd<\ga\leq \lambda\mu$ is a neighborhood of $\ga$ in the $d$-product topology. This is clear if $\ga\in I_{\gb+1}$. If $\ga=\gl\gamma$ with $\gamma\in\Lim$, then for all sufficiently large $\gb<\gamma$, $I_{\gb}\subseteq (\gd,\ga]$, if $\gb\in\Suc$, and $\gl\gb\in (\gd,\ga]$, if $\gb\in\Lim$; hence, the claim. The converse is also clear: a neighborhood of $\ga$ in the $d$-product topology contains a suitable interval of the form $(\gd,\ga]$.

\bl
\begin{enumr}
\item $\pi_0:Z_0\tto X$ is a $d$-map;
\item The map $\pi_1:Z\tto Y$ is continuous and open.
\end{enumr}
\el

\bp (i) This follows from the fact that $Z_0$ is homeomorphic to the product $X\times\iso(Y)$ with $\iso(Y)$ discrete.

(ii) The continuity of $\pi_1$ is clear. To show that it is open, we check that $\pi_1(U)$ is open in $Y$, for each basic open set $U$ of $Z$. If $U$ is $\pi_1^{-1}(V)$ for a set $V\in \tau_Y$, we are done. If $U=i_y(V)$, for some nonempty $V\in\tau_X$ and $y\in\iso(Y)$, then $\pi_1(U)=\{y\}\in\tau_Y$ as well.
\ep

The following observations will also be helpful.
\bl \label{dop}
\begin{enumr}
\item \label{punc} Suppose $x\in Z_1$. Then $U$ is a punctured neighborhood of $x$ in $\tau_Z$ iff $\{y\in Y: X_y\subseteq U\}$ is a punctured neighborhood of $\pi_1(x)$ in $\tau_Y$.
\item Let $A\subseteq Z$, $x\in Z_1$. Then, $x\in d_Z(A)$ iff $\pi_1(x)\in d_Y\{y\in Y : A\cap X_y\neq \emptyset\}$.
\end{enumr}
\el

Clearly, $X\otimes_d Y$ is scattered if so are $X$ and $Y$. Let us compute the rank function of $X\otimes_d Y$.

\bl \label{rankl}
\begin{enumr}
\item
If $x\in Z_0$ then $\rho_Z(x)=\rho_X(\pi_0(x))$.
\item
If $x\in Z_1$ then $\rho_Z(x)= \rho(X) + \rho_{dY}(\pi_1(x))$. (Obviously, $1+\rho_{dY}(y)=\rho_Y(y)$.)
\end{enumr}
\el

\bp For (i), we just notice that $\rho_Z(x)=\rho_{Z_0}(x)$, since $Z_0$ is open in $Z$. Since $\pi_0:Z_0\to X$ is a $d$-map, we have $\rho_{Z_0}(x)=\rho_X(\pi_0(x))$.

For (ii) we first prove that $Z_1\subseteq d_Z^\gb(Z)$, for each $\gb<\rho(X)$. This goes by transfinite induction on $\gb$. The cases when $\gb=0$ or $\gb\in\Lim$ are easy. Suppose the claim is true for all $\ga\leq\gb$. We prove that $Z_1\subseteq d_Z^{\gb+1}(Z)=d_Z(d_Z^\gb(Z))$. By (i), if $\gb<\rho(X)$ then $d_Z^\gb(X_y)=i_y(d_X^\gb(X))\neq \emptyset$, for all $y\in \iso(Y)$. Hence, any $y\in Z_1$ is a limit point of $d^\gb(Z_0)$, hence of $d_Z^\gb(Z)$, as required.

As a consequence we obtain that $d_Z^{\rho(X)}(Z)=Z_1$. Hence, $d_Z^{\rho(X)+\ga}(Z)=d_Z^{\ga}(Z_1)=\pi_1^{-1}(d_Y^{1+\ga}(Y))$, for each $\ga$. \ep

Next we would like to show that $d$-product is well-behaved w.r.t.\ $\ell$-extensions.

\bl \label{prod-lex} Suppose $X'$, $Y'$ are $\ell$-extensions of $X$, $Y$, respectively.
Then $X'\otimes_d Y'$ is an $\ell$-extension of $X\otimes_d Y$.
\el

\bp The rank function is preserved by the previous lemma. We only have to check that the identity function $id:X\otimes_d Y \to X'\otimes_d Y'$ is continuous at the points $x$ of successor rank. Let $Z=X\otimes_d Y$. If $x\in Z_0$, the claim follows from the hypothesis about $X'$. Suppose $x\in Z_1$. By Lemma \ref{rankl} $\rho_Y(\pi_1(x))$ is not a limit. Consider a basic open neighborhood $V'$ of $x$ in $Z'=X'\otimes_d Y'$. $V'$ has the form $\pi_1^{-1}(U')$, where $U'$ is a $Y'$-neighborhood of $\pi_1(x)$. Since $Y'$ is an $\ell$-extension of $Y$, there is a $Y$-neighborhood $U\subseteq U'$ such that $\pi_1(x)\in U$.
Then $x\in \pi_1^{-1}(U)\subseteq V'$, as required.
\ep

\bl \label{prod-lexm} Suppose $X$ and $Y$ are $\ell$-maximal and $\rho(X)\in\Suc$. Then $X\otimes_d Y$ is $\ell$-maximal.  \el

\bp We use Lemma \ref{l:l-maximality alternatively}. Let $Z=X\otimes_d Y$ and suppose $x\in Z$ and $\rho_Z(x)=\gl\in\Lim$. Consider any open $V\subseteq O_\gl(Z)=\{z\in Z:\rho_Z(z)<\gl\}$. We show that either $V\cup\{x\}$ is open, or there is a open neighborhood $U_x$ of $x$ such that $\rho_Z(V\cap U_x)<\gl$.

Case 1: $x\in Z_0$. In this case, $V\subseteq \cO_\gl(Z)\subseteq Z_0$ by Lemma \ref{rankl} (i). Also, $Z_0$ is $\ell$-maximal as a topological sum of $\ell$-maximal spaces. Hence, the claim follows from $\ell$-maximality of $Z_0$.

Case 2: $x\in Z_1$. In this case we represent $V$ as a union $W\cup \pi_1^{-1}(U)$, where $W$ is open in $Z_0$ and $U$ in $Y$.  Let $y:=\pi_1(x)$ and let $\mu:=\rho_Y(y)$. By Lemma \ref{rankl} (ii) we have $\rho(X)+\mu'=\gl$ where $\mu=1+\mu'$. Since $\gl$ is a limit ordinal, so is $\mu$ (unless $\mu'=0$ and $\gl=\rho(X)$, but then $\rho(X)$ would be a limit). Hence, we can use the $\ell$-maximality of $Y$ for $y$, $\mu$, and $U\subseteq O_{\mu}(Y)$.

Suppose $\rho_Y(U\cap U_y)=\gb<\mu$, for some open neighborhood $U_y$ of $y$ in $Y$. Let $U_x:=\pi_1^{-1}(U_y)$. Then $U_x\cap \pi_1^{-1}(U)=\pi_1^{-1}(U\cap U_y)$ is a neighborhood of $x$ (by the continuity of $\pi_1$). We also have $\rho_Z(V\cap U_x)\leq \rho(X)+\rho_{dY}(U\cap U_y)\leq \rho(X)+\gb<\gl$.

If, on the other hand, $U\cup\{y\}$ is open in $Y$, then $\pi_1^{-1}(U)\cup\{x\}$ is open in $Z$, by the continuity of $\pi_1$. Hence, so is $V\cup \{x\}=W\cup \pi_1^{-1}(U)\cup\{x\}$.
\ep

Consider now two spaces $X=[1,\gl]$ and $Y=[1,\mu]$ equipped with the interval topologies. Notice that since $X$ is compact there is an ordinal $\ga\in X$ whose rank is maximal. Then $\rho(X)=r(\ga)+1\in\Suc$.  Let $X'$ and $Y'$ be any $\ell$-maximal $\ell$-extensions of $X$ and $Y$, respectively. Combining the previous two lemmas we obtain the following corollary.
\bc
$X'\otimes_d Y'$ is an $\ell$-maximal $\ell$-extension of $[1,\gl\mu]$ taken with the interval topology.
\ec

Next, we investigate how $d$-product topology behaves w.r.t.\ the plus operation, for the case of $\ell$-maximal spaces.

\bl \label{prod-plus} Suppose $X$ and $Y$ are $\ell$-maximal and $\rho(X)\in \Suc$. Then $(X\otimes_d Y)^+\simeq (X^+\times \iso(Y))\sqcup (dY)^+$. \el

Here $\sqcup$ denotes the topological sum and $\iso(Y)$ comes with the discrete topology. Also notice that $X^+\times \iso(Y)$ is homeomorphic to $Z_0^+$, and that $(dY)^+$ is homeomorphic to the restriction of $Y^+$ to the set $dY$. (Any set $dA$ on $Y$ is contained in $dY$.)

\bp Let $Z=X\otimes_d Y$ and let $W$ denote $(X^+\times \iso(Y))\sqcup (dY)^+$. We can assume that $Z$ and $W$ have the same underlying set. By Lemma \ref{lmax-plus} the topology of $W$ is generated by sets of the form
\ben
\item
$i_y(V)$, where $y\in\iso(Y)$, $V\in \tau_X$ or $V=d^{\ga+1}X$ with $\ga<\rho(X)$;
\item $\pi_1^{-1}(U\cap dY)$ for $U\in\tau_{Y}$ and $\pi_1^{-1}(d^{\gb+1}Y)$ with $\gb<\rho(Y)$.
\een

To prove the inclusion of $\tau_W$ into $\tau_Z^+$ we check that all these basic open sets are open in $Z^+$.

If $V\in\tau_X$ then $i_y(V)\in\tau_Z$, hence it is open in $Z^+$. If $V=d^{\ga+1}X$ then $i_y(V)=X_y\cap d^{\ga+1}Z$, which is open in $Z^+$ as the intersection of two open sets. If $U\in \tau_Y$, then $\pi_1^{-1}(U\cap dY)=\pi_1^{-1}(U)\cap Z_1$ is open in $Z^+$. In fact, $Z_1=d^{\rho(X)}Z$ is open in $Z^+$, since $\rho(X)\in\Suc$. If $U=\pi_1^{-1}(d^{\gb+1}Y)$, then $U=d_Z^{\gb+1}Z_1=d_Z^{\rho(X)+\gb+1}Z$ which is open in $Z^+$.

Now we check that $\tau_Z^+$ is included in $\tau_W$. Since $X\oplus_d Y$ is $\ell$-maximal, $\tau_Z^+$ is generated by $\tau_Z$ and sets of the form $d^{\ga+1}Z$ for $\ga<\rho(Z)$. By Lemma \ref{rankl}
$$d^{\ga+1}Z=\begin{cases} d^{\ga+1}Z_0 \cup Z_1, & \text{if $\ga<\rho(X)$} \\
\pi_1^{-1}(d^{\gb+1}Y), & \text{if $\ga=\rho(X)+\gb$.} \end{cases}
$$
In both cases it is clearly open in $W$. On the other hand, open sets in $Z$ are generated by $i_y(V)$ with $V\in \tau_X$, in which case we are done, and $\pi_1^{-1}(U)$ with $U\in \tau_Y$. Let $U_0:=U\cap \iso(Y)$ and $U_1:= U\cap dY$. Notice that $\pi_1^{-1}(U_0)=\bigcup_{y\in U_0} X_y$ is open in $Z_0$, and hence in $W$, whereas $\pi_1^{-1}(U_1)$ is open in $dY$, hence in $(dY)^+$ and $W$. Hence, $\tau_Z$ is included in $\tau_W$ and we are done.
\ep

\section{Some operations on lme-spaces}

Recall that $(X,\tau_0,\dots,\tau_n)$ is an \emph{lme-space} based on a scattered topology $\tau$ if
$\tau_0=\tau'$ and $\tau_{i+1}=(\tau_i^+)'$, for each $i<n$, where $\sigma'$ denotes any $\ell$-maximal $\ell$-extension of $\sigma$. Obviously, any such space is a GLP$_n$-space. We call $(X,\tau_0,\dots,\tau_n)$ an \emph{ordinal lme-space} if $X$ is an ordinal and $\tau$ is the interval topology on $X$. We specify two constructions on lme-spaces.

First, we extend the operation of $d$-product to GLP-spaces.

\bd Suppose $(X,\tau_0,\dots,\tau_n)$ and $(Y,\gs_0,\dots,\gs_n)$ are two GLP$_n$-spaces. Let $(Z,\theta_0)$ be the $d$-product $(X,\tau_0)\otimes_d (Y,\gs_0)$. For each $i=1,\dots, n$ we specify a topology $\theta_i$ on $Z$ as the sum of the topologies $\tau_i$ on $X_y$, for each $y\in \iso(Y)$, and of $\gs_i$ on $dY$, where $\iso(Y)$ and $dY$ refer to the space $(Y,\gs_0)$. In other words, $\theta_i$ consists of the sets of the form $$\bigcup_{y\in \iso(Y)} i_y(U_y)\cup \pi_1^{-1}(V\cap dY)$$ where $U_y\subseteq X$, $U_y\in\tau_i$ and $V\in\gs_i$. We note that the functions  $\pi_0:(Z_0,\theta_i\rst Z_0)\tto (X,\tau_i)$ and $\pi_1:(Z_1,\theta_i\rst Z_1)\tto (dY,\gs_i\rst dY)$ are $d$-maps, for $i=1,\dots,n$.
\ed

\bl $(Z,\theta_0,\theta_1,\dots,\theta_n)$ is a GLP$_n$-space. \el

\bp We make use of the fact that the plus operation on topologies distributes over topological sums. Hence, $\theta_i^+\subseteq \theta_{i+1}$ on $Z$, for all $i=1,\dots,n-1$. Thus, we only have to show that $\theta_0^+\subseteq \theta_1$.

Consider any $A\subseteq Z$. By Lemma \ref{dop} $$d_Z(A)=\pi_1^{-1}(d_Y\{y:A\cap X_y\neq \emptyset\})\cup d_{Z_0}(A\cap Z_0).$$ In fact, any $x\in d_Z(A)\cap Z_0$ must belong to $d_{Z_0}(A\cap Z_0)$, since $Z_0$ is open in $Z$, hence the claim. However, both $\pi_1^{-1}(d_Y\{y:A\cap X_y\neq \emptyset\})$ and $d_{Z_0}(A\cap Z_0)$ are open in $\theta_1$. This is because $d_Y\{y:A\cap X_y\neq \emptyset\}$ is open in $(dY,\gs_1)$ and $d_{Z_0}(A\cap Z_0)$ is open in $(Z_0,\theta_1)$.
\ep

\bl Suppose $(X,\tau_0,\dots,\tau_n)$ and $(Y,\gs_0,\dots,\gs_n)$ are lme-spaces based on $\tau$ and $\gs$, respectively, such that both $\rho(X,\tau)$ and $\rho(Y,\gs)$ are successor ordinals. Then $X\otimes_d Y$ is an lme-space based on $(X,\tau)\otimes_d (Y,\gs)$. Moreover, $\rho((X,\tau)\otimes_d (Y,\gs))$ is a successor ordinal.
\el

\bp Let $Z=(Z,\theta_0,\dots,\theta_n)$ denote $X\otimes_d Y$. The fact that $(Z,\theta_0)$ is an $\ell$-maximal $\ell$-extension of $(X,\tau)\otimes_d (Y,\gs)$ follows from Lemmas \ref{prod-lex} and \ref{prod-lexm}.

We show that $(Z,\theta_1)$ is an $\ell$-maximal $\ell$-extension of $(Z,\theta_0^+)$. By Lemma~\ref{prod-plus} $$(Z,\theta_0^+)\simeq ((X,\tau_0^+)\times \iso(Y))\sqcup (dY,\gs_0^+).$$ On the other hand, by definition, $$(Z,\theta_1)\simeq ((X,\tau_1)\times \iso(Y))\sqcup (dY,\gs_1).$$ We have that $(X,\tau_1)$ is an $\ell$-maximal $\ell$-extension of $(X,\tau_0^+)$ and $(dY,\gs_1)$ that of $(dY,\gs_0^+)$. This relation then holds for the respective topological sums.

Finally, we remark that $(Z,\theta_{i+1})$ is an $\ell$-maximal $\ell$-extension of $(Z,\theta_i^+)$, for $i=1,\dots, n$, because $(X,\tau_{i+1})$ is an $\ell$-maximal $\ell$-extension of $(X,\tau_i^+)$ and $(dY,\gs_{i+1})$ is an $\ell$-maximal $\ell$-extension of $(dY,\gs_i^+)$. These relations then must also hold for the respective topological sums.
\ep

\bc
Let $X$ and $Y$ be ordinal lme-spaces on $[1,\gl]$ and $[1,\mu]$, respectively. Then $X\otimes_d Y$ is an ordinal lme-space on $[1,\gl\mu]$.
\ec

We are going to introduce another key operation on lme-spaces called \emph{lifting}. Before doing it we state a simple `pullback' lemma.

\bl \label{lme-pull}
Let $(X,\tau_0,\dots,\tau_n)$ be an lme-space based on $\tau$, and let $h:(Y,\gs)\to (X,\tau)$ be a $d$-map. Then there is an lme-space $(Y,\gs_0,\dots,\gs_n)$ based on $\gs$ such that $h:(Y,\gs_i)\to (X,\tau_i)$ is a $d$-map, for each $i\leq n$.
\el

\bp This statement is proved by a repeated application of Lemmas \ref{pullback} and \ref{lift+} as indicated in the following diagram.

$$
\xymatrix{(Y,\gs)  \ar[d]_d \ar@{.>}[r]_{lm} & (Y,\gs_{0})  \ar@{.>}[d]_d & (Y,\gs_{0}^{+}) \ar@{.>}[d]_d  \ar@{.>}[r]_{lm} & (Y,\gs_{1}) \ar@{.>}[d]_d &  \dots \\
(X,\tau) \ar[r]^{lm} & (X,\tau_0) & (X,\tau_0^+) \ar[r]^{lm} & (X,\tau_1) & \dots }
$$

\medskip
Here, the arrows labeled by `$d$' indicate $d$-maps; the arrows labeled by `$lm$' indicate $\ell$-maximal $\ell$-extensions. Dotted arrows are being proved to exist given the rest. Thus, the two squares represent the first two applications of Lemma \ref{pullback}, and the transition from the right vertical arrow of the first square to the left vertical arrow of the second one is an application of Lemma  \ref{lift+}.
\ep

\bl[lifting] \label{lme-lift}
Suppose $X=([0,\gl],\tau_1,\dots,\tau_n)$ is an ordinal lme-space. Then there is an ordinal lme-space $Y=([1,\gw^\gl],\gs_0,\gs_1,\dots,\gs_n)$ such that
$r:([1,\gw^\gl],\gs_i)\tto ([0,\gl],\tau_i)$ is a $d$-map, for each $i=1,\dots,n$.
\el

Such an $Y$ can be called a \emph{lifting} of the space $X$, since it is similar to $X$ w.r.t.\ higher topologies (starting from the second one rather than the first).

\bp
Topology $\gs_0$, being an $\ell$-maximal $\ell$-extension of the order topology, has the same rank function.
Therefore, $r:([1,\gw^\gl],\gs_0) \tto ([0,\gl],\tau_{\leftarrow})$ is a $d$-map. By Lemma \ref{lift+} we obtain that $r:([1,\gw^\gl],\gs_0^+) \tto ([0,\gl],\tau_{<})$ is a $d$-map, as well. Since $\tau_1$ is an $\ell$-maximal $\ell$-extension of the order topology, we are now in a position to apply Lemma \ref{lme-pull}. So, we obtain an lme-space $([1,\gw^\gl],\gs_1,\dots,\gs_n)$ based on $\gs_0^+$ such that $r:([1,\gw^\gl],\gs_i)\tto ([0,\gl],\tau_i)$ is a $d$-map, for each $i=1,\dots,n$. It follows that $Y=([1,\gw^\gl],\gs_0,\gs_1,\dots,\gs_n)$ is as required.
\ep

\section{Proof of main lemma}
Now we provide the key construction proving Lemma~\ref{l:key construction} above.

\bp
For each $J_n$-tree $(T,R_0,\dots,R_n)$ with a root $a$ we are going to build an ordinal lme-space  $X=([1,\gl],\tau_0,\dots,\tau_n)$ and a $J_n$-morphism $f:X\tto T$ such that $f^{-1}(a)=\{\gl\}$. Such $J_n$-morphisms will be called \emph{suitable}. The construction goes by induction on $n$ with a subordinate induction on the $R_0$-height  of $T$, which is denoted $ht_0(T)$.

\bigskip
If $n=0$ we let $\tau_0$ be the interval topology and notice that on any $\gl<\gw^\gw$ this topology is $\ell$-maximal (since there are no points of limit rank). From the topological completeness proofs for the G\"odel--L\"ob logic it is known (see \cite{BEG05}) that there is an ordinal $\gl<\gw^\gw$ and a suitable $d$-map from $[1,\gl]$ onto $(T,R_0)$. This map is constructed by induction on $ht_0(T)$.

If $ht_0(T)=0$, then $T$ consists of a single point $a$. We put $\gl=1$ and $f(1)=a$. If $ht_0(T)=m>0$ let $a_1,\dots,a_l$ be the children of the root $a$, and let $T_i$ denote the subtree generated by $a_i$, for $i\leq l$. By the induction hypothesis, there are ordinals $\kappa_1,\dots,\kappa_l$ and suitable $d$-maps $g_i:[1,\kappa_i]\tto T_i$, for each $i=1,\dots,l$. Let $\kappa:=\kappa_1+\cdots+\kappa_l$, then $[1,\kappa]$ can be identified with the topological sum  $\bigsqcup_{i=1}^l [1,\kappa_i]$. Let $g:[1,\kappa]\tto \bigsqcup_{i=1}^l T_i$ be defined by
$$g(\ga):=g_i(\gb),\ \text{if $\ga=\kappa_1+\cdots+\kappa_{i-1}+\gb$, $\gb\in[1,\kappa_i]$}.$$ Then $g$ is clearly a $d$-map.

We now let $\gl:=\kappa\gw$ and let $f:[1,\gl]\tto T$ be defined by
$$
f(\ga):=
\begin{cases}
g(\gb), &  \text{if $\ga=\gl n+\gb$ where $n<\gw$, $\gb\in[1,\kappa]$}, \\
a, &  \text{if $\ga=\gl$}.
\end{cases}
$$
It is then easy to verify that $f$ is, indeed, a suitable $d$-map. This accounts for the case $n=0$.

\bigskip
For the induction step suppose the lemma is true for each $J_k$-tree with $k< n$. Let $T=(T, R_0, \dots, R_n)$ be an $J_{n}$-tree with the root $a$. We prove our claim by induction on the $R_0$-depth of $T$.

\medskip
\textsc{Case 1:} $ht_0(T)=0$, in other words $R_0=\emptyset$. Let $T_1:=(T,R_1,\dots,R_n)$. By the induction hypothesis there is a suitable $J_{n-1}$-morphism $f_1:X_1\tto T_1$ where $X_1=([1,\gl_1],\tau_1,
\dots,\tau_n)$. We note that $X_1$ is isomorphic to $([0,\mu],\tau_1, \dots,\tau_n)$, for some $\mu$ (obviously, $\mu=\gl_1$ if $\gl_1$ is infinite). By the Lifting lemma there is an ordinal lme-space $X=([1,\gl],\gs_0,\gs_1,\dots,\gs_n)$ such that $\gl=\gw^{\mu}$ and $$r:([1,\gl],\gs_i)\tto ([0,\mu],\tau_i)$$ is a $d$-map, for each $i\in [1,n]$. It follows that $f:=r\circ f_1$ is a suitable $J_n$-morphism. In fact, it is immediate that conditions $(j_1)$, $(j_2)$ are met and that $(j_3),(j_4)$ are satisfied for each $k\geq 1$. Let us consider $(j_3)$ for $k=0$.

Since $R_0$ is empty, the only $1$-hereditary root of $T$ is in fact the unique $0$-hereditary root $a$, thus $R^*_0(a)=T\setminus \{a\}$. Then clearly  $f^{-1}(R_0^*(a))=[1,\gl)$ and $f^{-1}(R_0^*(a)\cup\{a\})=[1,\gl]$, both of which are $\gs_0$-open. Thus $(j_3)$ is met.

Condition $(j_4)$ for $k=0$ boils down to the fact that $f^{-1}(a)$ is discrete. However, $f^{-1}(a)$ is the  singleton $\{\gl\}$. Thus $(j_4)$ is also met and
$f:X\tto T$ is the required $J_n$-morphism.

\medskip
\textsc{Case 2:} $ht_0(T)=m>0$. Let $a_1,\dots,a_l$ be the immediate $R_0$-successors of $a$ which are hereditary $1$-roots. Denote $T_i=\{a_i\}\cup R_0^*(a_i)$ for $i\in[1,l]$ and $T_0=\{a\}\cup R_1^*(a)$. Note that $T=\bigcup_{i=0}^l T_i$. Furthermore, for each $i\in[1,l]$ the subframe $T_i$ of $T$ is a $J_{n}$-tree of $R_0$-depth less than $m$. By the induction hypothesis there exist ordinal lme-spaces $S_i=([1,\kappa_i],\xi^i_0,\dots,\xi^i_{n})$ and suitable $J_{n}$-morphisms $g_i:S_i\to T_i$. Let $\kappa:=\kappa_1+\dots+\kappa_l$, then $[1,\kappa]$ can be identified with the disjoint union  $\bigsqcup_{i=1}^l [1,\kappa_i]$. Let $\xi_0,\dots,\xi_{n}$ be the topologies of the corresponding topological sum, that is, $\xi_j=\bigsqcup_{i=1}^l \xi^i_j$, and let $g:[1,\kappa]\to \bigsqcup_{i=1}^l T_i$ be the disjoint union of $g_i$, i.e. $g=\bigsqcup_{i=1}^l g_i$. Notice that $\bigsqcup_{i=1}^l T_i$ is identified with $R_0(a)$. It is easy to see that $X=([1,\kappa],\xi_1,\dots,\xi_{n})$ is an ordinal lme-space and that $g:[1,\kappa]\to R_0(a)$ is a $J_n$-morphism.

Now consider the 1-sheet $(T_0,R_1,\dots,R_n)$. By the induction hypothesis (for $n$) there is an ordinal lme-space $Y_0=([1,\gl_0],\tau_1,\dots,\tau_n)$ and a suitable $J_{n-1}$-morphism $g_0:Y_0\tto T_0$. Let $Y=([1,\gw^{\gl_0}],\gs_0,\gs_1,\dots,\gs_n)$ be an ordinal lme-space defined as in \textsc{Case 1} and
let $h:Y\tto (T_0,\emptyset,R_1,\dots,R_n)$ be the corresponding suitable $J_n$-morphism.

We now consider the $d$-product $Z:=X\otimes_d Y$ of these ordinal lme-spaces. Note that $\iso(Y)=\{\ga+1:\ga<\kappa_0\}$ and $dY=\Lim\cap [1,\kappa_0]$. Hence, we can identify $Z$ with an ordinal lme-space $([1,\gl],\theta_0,\dots,\theta_n)$ where $\gl:=\kappa \cdot \gw^{\gl_0}$, $X_{\ga+1}=[\kappa\ga+1,\kappa(\ga+1)]$, for all $\ga<\kappa_0:=\gw^{\gl_0}$. Hence, $Z_0=\bigsqcup_{\ga<\kappa_0}X_{\ga+1}$ and $Z_1=\{\kappa\gl:\gl\in\Lim, \gl\leq\kappa_0\}$.
The associated projection maps $\pi_0:Z_0\tto X$ and $\pi_1:Z\tto Y$ are defined by formulas $\pi_1(\kappa\gl)=\gl$ and $\pi_0(\kappa\ga+\gb)=\gb$, where $\gl\in\Lim$, $\gl\leq\kappa_0$, $\gb\in [1,\kappa]$, $\ga<\kappa_0$.

We define the required $J_n$-morphism $f:Z\tto T$ as follows:
$$
f(z):= \begin{cases} g(\pi_0(z)), & \text{if $z\in Z_0$}, \\
h(\pi_1(z)), & \text{if $z\in Z_1$.}
\end{cases}
$$

We have to check that $f$ satisfies $(j_1)$--$(j_4)$. Recall that for $k\geq 1$ the space $(Z,\theta_k)$ is homeomorphic to the topological sum of $Z_0\simeq \bigsqcup_{\ga<\kappa_0} (X,\xi_k)$ and $Z_1\simeq (Y,\gs_k)$. Then both $\pi_0:(Z_0,\theta_k{\rst} Z_0)\tto (X,\xi_k)$ and $\pi_1:(Z_1,\theta_k{\rst} Z_1)\tto (Y,\gs_k)$ are $d$-maps. Since both $g$ and $h$ are $J_n$-morphisms, it follows that conditions $(j_1)$--$(j_4)$ are satisfied for all $k\geq 1$. We must only check  $(j_2)$--$(j_4)$ for $k=0$.

Recall that the topology $\theta_0$ on a $d$-product $X\otimes_d Y$ is generated by the base of open sets $\{i_y(V):V\in\tau_X, y\in \iso(Y)\}$ and $\{\pi_1^{-1}(U):U\in\tau_Y\}$. Hence, in order to check $(j_2)$ it is sufficient to show that the image under $f$ of any such basic open set is open. Since $i_y(V)\subseteq Z_0$ and $\pi_0(i_y(V))=V$ we obtain that $f(i_y(V))=g(V)$ is open ($g$ is a $J_n$-morphism). On the other hand, if $U$ is non-empty, then $f(\pi_1^{-1}(U))=h(U)\cup g(X)= h(U)\cup R_0(a)$. This holds because every nonempty open subset of $Y$, in particular $U$, has a point $y$ of rank $0$. Then $X_y\subseteq\pi_1^{-1}(U)$ and hence $f(\pi_1^{-1}(U))\supseteq f(X_y)=g(X)$. Clearly, both $h(U)$ and $R_0(a)$ are open in $T$. Hence, $f$ satisfies $(j_2)$.

Condition $(j_3)$ follows from the fact that both $\pi_0$ and $\pi_1$ are continuous. Indeed, if $w$ is a hereditary 1-root of $T$, then either $w=a$ or $w\in R_0(a)$. In the former case $R^*_0(w)=T\setminus\{a\}$ and hence $f^{-1}(R^*_0(w))=[1,\gl)$ is open. Similarly, $f^{-1}(R^*_0(w)\cup \{w\})= Z$ is open.

If $w\in R_0(a)$ then both $R^*_0(w)$ and $R^*_0(w)\cup\{w\}$ are contained in $R_0(a)$. Since $g$ is a $J_n$-morphism, $g^{-1}(R^*_0(w))$ is open. Then $f^{-1}(R^*_0(w))=\pi_0^{-1}(g^{-1}(R^*_0(w))$ is open, by the continuity of $\pi_0$. The argument for $R^*_0(w)\cup\{w\}$ is similar. Hence, condition $(j_3)$ is met.

To check condition $(j_4)$ assume $w$ is a hereditary 1-root of $T$. If $w=a$ then $f^{-1}(w)$ is the singleton $\{\gl\}$. If $w\in R_0(a)$ then $g^{-1}(w)$ is discrete as a subspace of $X$, since $g$ is a $J_n$-morphism. We know that $\pi_0:Z_0\tto X$ is both continuous and pointwise discrete. Hence, $f^{-1}(w)=\pi_0^{-1}(g^{-1}(w))$ is discrete in $Z_0$ and thereby in $Z$ ($Z_0$ is open in $Z$). This shows $(j_4)$.

Thus, we have checked that $f:Z\tto T$ is a suitable $J_n$-morphism, which completes the proof of Lemma \ref{l:key construction} and thereby of Theorem \ref{t:main-1}.
\ep

\appendix
\section{Appendix: Proof of Proposition \ref{neighbor}.}

The correspondence between Magari frames and scattered topological spaces is essentially due to Esakia.
A frame $(X,\gd)$ is called a \emph{Magari frame} if it satisfies the following identities, for any $A,B\subseteq X$:
\begin{enumr}
\item $\gd(A\cup B) = \gd A\cup\gd B$; \quad $\gd\emptyset = \emptyset$;
\item $\gd A = \gd (A\setminus \gd A)$.
\end{enumr}
It is well-known and easy to see that $(X,\gd)$ is Magari iff $(X,\gd)$ validates the axioms of G\"odel--L\"ob logic $\GL$ (corresponding to Axioms (i)--(iii) of $\GLP$). We notice that any such operator $\gd$ is monotone, that is, $A\subseteq B$ implies $\gd A\subseteq \gd B$. In addition, $\gd\gd A\subseteq \gd A$ holds in any Magari frame, since the formula $\Diamond\Diamond p\to \Diamond p$ is a theorem of $\GL$.

\bl
If $(X,\tau)$ is a scattered topological space then $(X,d_\tau)$ is a Magari frame.
\el

\bp The validity of (i) is obvious, whereas (ii) means that any limit point of $A$ is a limit point of the set $\iso(A)$ of isolated points of $A$. Let $x\in d_\tau A$ and let $U$ be an open neighborhood of $x$. $U\cap A\setminus\{x\}$ is not empty, hence it has an isolated point $y$. Then $y\in\iso(A)$ as well.
\ep

Suppose $(X,\tau_0,\tau_1,\dots)$ is a GLP-space.
To prove Part (i) of Proposition \ref{neighbor} observe that Axioms (i)--(iii) of $\GLP$ are satisfied in $(X,d_0,d_1,\dots)$ by the previous corollary. Axiom (iv) is clearly valid since $\tau_n\subseteq \tau_{n+1}$.

To check Axiom (v) consider a set of the form $d_n(A)$. Since $X$ is a GLP-space, $d_n(A)$ is open in $\tau_{n+1}$. Hence, every $x\in d_n(A)$ cannot be a $\tau_{n+1}$-limit point of $X\setminus d_n(A)$, that is, $x\in \tilde{d}_{n+1}(d_n A)$. In other words, $d_n(A)\subseteq \tilde{d}_{n+1}(d_n A)$, for any $A$, that is, Axiom (v) is valid.

\bigskip
To prove Part (ii) of Proposition \ref{neighbor} we first remark that, if $(X,\gd)$ is a Magari frame, then the operator $c(A):= A\cup \gd A$ satisfies the Kuratowski axioms of the topological closure. This defines a topology on $X$ in which any set $A$ is closed iff $A=c(A)$ iff $\gd A\subseteq A$. (Alternatively, one can check that the collection of all sets $U$ satisfying $U\subseteq \tilde\gd U$ is a topology.)

\bl Suppose $(X,\gd)$ is Magari. Then, for all $x\in X$,
\begin{enumr}
\item $x\notin \gd(\{x\})$;
\item $x\in \gd A \iff x\in \gd(A\setminus\{x\})$.
\end{enumr}
\el

\bp (i) By Axiom (iii) $\gd\{x\}\subseteq \gd(\{x\}\setminus \gd\{x\})$. If $x\in \gd\{x\}$ then $\gd(\{x\}\setminus \gd\{x\})\supseteq \gd(\{x\}\setminus\{x\})=\gd\emptyset=\emptyset$. Hence, $\gd\{x\}=\emptyset$, a contradiction.

(ii) $x\in \gd A$ implies $x\in \gd ((A\setminus\{x\})\cup \{x\})= \gd(A\setminus\{x\})\cup \gd\{x\}$. By (i), $x\notin\gd\{x\}$, hence $x\in \gd(A\setminus\{x\})$. The other implication follows from the monotonicity of $\gd$.
\ep

\bl Suppose $(X,\gd)$ is Magari and $\tau$ is the associated topology.
Then $\gd=d_\tau$.
\el

\bp Let $d=d_\tau$; we show that, for any set $A\subseteq X$, $dA=\gd A$. Notice that, for any $B$, $cB=dB\cup B=\gd B\cup B$.

Assume $x\in \gd A$ then $$x\in \gd(A\setminus\{x\})\subseteq c(A\setminus\{x\})\subseteq d(A\setminus \{x\})\cup (A\setminus\{x\}).$$ Since $x\notin A\setminus\{x\}$, we obtain $x\in d(A\setminus \{x\})$. By the monotonicity of $d$, $x\in dA $.

Similarly, if $x\in dA$ then $x\in d(A\setminus\{x\})$. Hence, $$x\in c(A\setminus\{x\})=\gd(A\setminus\{x\})\cup (A\setminus \{x\}).$$
Since $x\notin A\setminus\{x\}$ we obtain $x\in \gd A$.
\ep

\bl Suppose $(X,\gd)$ is Magari and $\tau$ is the associated topology.
Then $(X,\tau)$ is scattered.
\el

\bp Since $\gd$ is L\"ob we know that $\gd=d_\tau$. We show that any nonempty subspace $A\subseteq X$ has an isolated point.

Suppose not, then $\iso(A)=A\setminus \gd A=\emptyset$. Then $\gd A=\gd (A\setminus \gd A)=\gd\emptyset=\emptyset$. Then $A=A\setminus\gd A= \emptyset$.
\ep

Now we prove Part (ii). Let $(X,\gd_0,\gd_1,\dots)$ be a neighborhood frame satisfying $\GLP$. Then each of the frames $(X,\gd_n)$ is Magari, hence it defines a scattered topology $\tau_n$ on $X$ for which $\gd_n= d_{\tau_n}$. Recall that $U\in\tau_n$ iff $U\subseteq \tilde\delta_n(U)$. We only have to show that the last two conditions of a GLP-space are met.

Suppose $U\in\tau_n$, then $U\subseteq\tilde\delta_n(U)\subseteq\tilde\delta_{n+1}(U)$ by Axiom (iv). Hence, $U\in\tau_{n+1}$. Thus, $\tau_n\subseteq \tau_{n+1}$.

Similarly, by Axiom (v) for any set $A$ we have $\delta_n(A)\subseteq \tilde\delta_{n+1}(\delta_n(A))$. Hence, $d_{\tau_n}(A)=\delta_n(A)\in\tau_{n+1}$. Thus, $(X,\tau_0,\tau_1,\dots)$ is a GLP-space.

\input{glp-arxiv.bb}


\end{document}